\numberwithin{equation}{section}
\theoremstyle{definition}
\newtheorem{Def}{Definition}[section]
\newtheorem{Rem}{Remark}[section]
\newenvironment{Exa}
  {\pushQED{\qed}\Exax}
  {\popQED\endExax}
\theoremstyle{plain}
\newtheorem{The}{Theorem}[section]
\newtheorem{Pro}{Proposition}[section]
\newcommand{\bb}[1]{\mathbb{#1}}
\newcommand{\norm}[1]{\left\|{#1}\right\|}
\newcommand{\abs}[1]{\left|{#1}\right|}
\newcommand{\scalarp}[1]{\left\langle{#1}\right\rangle}
\newcommand{\Remref}[1]{Remark \ref{#1}}
\newcommand{\Figref}[1]{Fig. \ref{#1}}
\newcommand{\Exaref}[1]{Example \ref{#1}}
\newcommand{\Appref}[1]{Appendix \ref{#1}}
\newcommand{\Secref}[1]{Section \ref{#1}}
\newcommand{\Defref}[1]{Definition \ref{#1}}
\newcommand{\Theref}[1]{Theorem \ref{#1}}
\newcommand{\Proref}[1]{Proposition \ref{#1}}
\newcommand{\Defenuref}[2]{Definition \ref{#1}\eqref{#2}}
\newcommand{\Theenuref}[2]{Theorem \ref{#1}\eqref{#2}}
\begin{document}

\title{Nonlinear approximation with nonstationary Gabor frames}

\author{Emil Solsb{\ae}k Ottosen}
\author{Morten Nielsen}

\address{Department of Mathematical Sciences, Aalborg University, Skjernvej 4, 9220 Aalborg {\O}, Denmark}
\email{emilo@math.aau.dk}
\email{mnielsen@math.aau.dk}

\keywords{Time-frequency analysis, nonstationary Gabor frames, sparse frame expansions, decomposition spaces, nonlinear approximation}
\subjclass[2010]{41A17, 42B35, 42C15, 42C40}

\begin{abstract}
We consider sparseness properties of adaptive time-frequency representations obtained using nonstationary Gabor frames (NSGFs). NSGFs generalize classical Gabor frames by allowing for adaptivity in either time or frequency. It is known that the concept of painless nonorthogonal expansions generalizes to the nonstationary case, providing perfect reconstruction and an FFT based implementation for compactly supported window functions sampled at a certain density. It is also known that for some signal classes, NSGFs with flexible time resolution tend to provide sparser expansions than can be obtained with classical Gabor frames. In this article we show, for the continuous case, that sparseness of a nonstationary Gabor expansion is equivalent to smoothness in an associated decomposition space. In this way we characterize signals with sparse expansions relative to NSGFs with flexible time resolution. Based on this characterization we prove an upper bound on the approximation error occurring when thresholding the coefficients of the corresponding frame expansions. We complement the theoretical results with numerical experiments, estimating the rate of approximation obtained from thresholding the coefficients of both stationary and nonstationary Gabor expansions.
\end{abstract}

\maketitle

\section{Introduction}\label{PAPERC:Sec:1}
The field of Gabor theory \cite{Gabor1946,Daubechies86,Young80} is concerned with representing signals as atomic decompositions using time-frequency localized atoms. The atoms are constructed as time-frequency shifts of a fixed window function, according to some lattice parameters, such that the resulting system constitutes a frame and, therefore, guarantees stable expansions \cite{Christensen2016,Ron97,Mallat2009}. Such frames are known under the name of Weyl-Heisenberg frames or Gabor frames and have been proven useful in a variety of applications \cite{Grochenig2001,Dorfler2001,Pfander2013}. The structure of Gabor frames implies a time-frequency resolution which depends only on the lattice parameters and the window function. In particular, the resolution is independent of the signal under consideration, which makes the corresponding implementation fast and easy to handle. The usage of a predetermined time-frequency resolution naturally raises the question of whether an improvement can be obtained by taking the signal class into consideration? This question has lead to many interesting approaches for constructing adaptive time-frequency representations \cite{Dorfler2011Quilt,Jaillet2007,Wolfe2001,Zibulski1997}. Unfortunately, for representations with resolution varying in \emph{both} time and frequency there seems to be a trade-off between perfect reconstruction and fast implementation \cite{Liuni2013}. In this article, we therefore consider time-frequency representations with resolution varying in \emph{either} time or frequency. The idea is to generalise the theory of painless nonorthogonal expansions \cite{Daubechies86} to the situation where multiple window functions are used along either the time- or the frequency axis. The resulting systems, which allow for perfect reconstruction and an FFT based implementation, are called painless generalised shift-invariant systems \cite{Ron2005,Hernandez2002} or painless nonstationary Gabor frames (painless NSGFs) \cite{Holighaus2014,Balazs11}. As already noted in \cite{Balazs11}, painless NSGFs tend to produce sparser representations than classical Gabor frames for certain classes of music signals. Sparseness of a time-frequency representation is desirable for several reasons, mainly because it may reduce the computational cost for manipulating and storing the coefficients \cite{Foucart2013,Devore1993}. Additionally, many signal classes are characterized by some kind of sparseness in time or frequency and the corresponding signals are, therefore, best described by a sparse time-frequency representation. For such signals, the task of feature identification also benefits from a sparse representation as the particular characteristics of the signal becomes easier to identify. 

In this article we consider sparseness properties of painless NSGFs with resolution varying in time. Whereas modulation spaces \cite{Feichtinger83,Grochenig2001,Feichtinger2006} have turned out to be the proper function spaces for analyzing sparseness properties of classical Gabor frames \cite{Grochenig2000}, we need a more general framework for the nonstationary case. A painless NSGF with flexible time resolution corresponds to a sampling grid which is irregular over time but regular over frequency for each fixed time point. We therefore search for a smoothness space which is compatible with a (more or less) arbitrary partition of the time domain. Such a flexibility can be provided by decomposition spaces, as introduced by Feichtinger and Gr\"{o}bner in \cite{Feichtinger1985,Feichtinger1987}. Decomposition spaces may be viewed as a generalization of the classical Wiener amalgam spaces \cite{Feichtinger83Wiener,Heil2003} but with no assumption of an upper bound on the measure of the members of the partition. Another way of stating this is that decomposition spaces are constructed using bounded \emph{admissible} partitions of unity \cite{Feichtinger1985} instead of bounded \emph{uniform} partitions of unity \cite{Feichtinger83Wiener}. The partitions we consider are obtained by applying a set of invertible affine transformations $\{A_k(\cdot)+c_k\}_{k\in \bb{N}}$ on a fixed set $Q\subset \bb{R}^d$ \cite{Borup07}.

We use decomposition spaces to characterize signals with sparse expansions relative to painless NSGFs with flexible time resolution. We measure sparseness of an expansion by a mixed norm on the coefficients and show that the sparseness property implies an upper bound on the approximation error obtained by thresholding the expansion. Using the terminology from nonlinear approximation, such an upper bound is also known as a Jackson inequality \cite{Devore1993,Butzer72}. A similar characterization for classical Gabor frames using modulation spaces was proven by Gr\"ochenig and Samarah in \cite{Grochenig2000}. For the nonstationary case, we provided a characterization in \cite{Ottosen2017Article1} for painless NSGFs with flexible frequency resolution using decomposition spaces. A different approach to this problem is considered by Voigtlaender in \cite{Voigtlaender2016}, where the painless assumption is replaced with a more general analysis of the sampling parameter. The decomposition spaces considered in both \cite{Ottosen2017Article1} and \cite{Voigtlaender2016} are based on partitions of the frequency domain, which is not a natural choice for NSGFs with flexible time resolution. In this article we consider decompositions of the time domain, which allow for compactly supported window functions sampled at a low density (compared to the general theory formulated in \cite{Voigtlaender2016}). It is worth noting that there is a significant mathematical difference between decomposition spaces in time and in frequency.

The structure of this this article is as follows. In \Secref{PAPERC:Sec:2} we formally introduce decomposition spaces in time and prove several important properties of these spaces. Then, based on the ideas in \cite{Ottosen2017Article1}, we show in \Secref{PAPERC:Sec:3} how to construct a suitable decomposition space for a given painless NSGF with flexible time resolution. In \Secref{PAPERC:Sec:4} we prove that the suitable decomposition space characterizes signals with sparse frame expansions and we provide an upper bound on the approximation rate occurring when thresholding the frame coefficients. Finally, in \Secref{PAPERC:Sec:5} we present the numerical results and in \Secref{PAPERC:Sec:6} we give the conclusions. 

Let us now briefly go through our notation. By $\hat{f}(\xi):=\int_{\bb{R}^d}f(x)e^{-2\pi i x\cdot\xi}dx$ we denote the Fourier transform with the usual extension to $L^2(\bb{R}^d)$. With $F\asymp G$ we mean that there exist two constants $0<C_1,C_2<\infty$ such that $C_1F\leq G\leq C_2F$. For two normed vector spaces $X$ and $Y$, $X\hookrightarrow Y$ means that $X\subset Y$ and $\norm{f}_Y\leq C\norm{f}_X$ for some constant $C$ and all $f\in X$. We say that a non-empty open set $\Omega'\subset \bb{R}^d$ is \emph{compactly contained} in an open set $\Omega\subset \bb{R}^d$ if $\overline{\Omega'}\subset \Omega$ and $\overline{\Omega'}$ is compact. We call $\{x_i\}_{i\in \mathcal{I}}\subset \bb{R}^d$ a $\delta-$separated set if $\inf_{j,k\in \mathcal{I}, j\neq k}\|x_j-x_k\|_2=\delta>0$. Finally, by $I_d$ we denote the identity operator on $\bb{R}^d$ and by $\chi_Q$ we denote the indicator function for a set $Q\subset \bb{R}^d$. 

\section{Decomposition spaces}\label{PAPERC:Sec:2}
In this section we define decomposition spaces \cite{Feichtinger1985} based on structured coverings \cite{Borup07}. For an invertible matrix $A\in GL(\bb{R}^d)$, and a constant $c\in \bb{R}^d$, we define the affine transformation $Tx=Ax+c$ with $x\in \bb{R}^d$. Given a family $\mathcal{T}=\{A_k(\cdot)+c_k\}_{k\in \bb{N}}$ of invertible affine transformations on $\bb{R}^d$, and a subset $Q\subset \bb{R}^d$, we let $\{Q_T\}_{T\in \mathcal{T}}:=\{T(Q)\}_{T\in \mathcal{T}}$ and
\begin{equation}\label{PAPERC:eq:neighbouringindices}
\widetilde{T}:=\left\{T'\in \mathcal{T}~\big|~ Q_{T'}\cap Q_T \neq \emptyset\right\},\quad T\in \mathcal{T}.
\end{equation}
We say that $\mathcal{Q}:=\{Q_T\}_{T\in \mathcal{T}}$ is an \emph{admissible covering} of $\bb{R}^d$ if $\bigcup_{T\in \mathcal{T}}Q_T=\bb{R}^d$ and there exists $n_0\in \bb{N}$ such that $|\widetilde{T}|\leq n_0$ for all $T\in \mathcal{T}$.
\begin{Def}[$\mathcal{Q}-$moderate weight]\label{PAPERC:Def:moderateweight}
Let $\mathcal{Q}:=\{Q_T\}_{T\in \mathcal{T}}$ be an admissible covering. A function $u:\bb{R}^d\rightarrow (0,\infty)$ is called $\mathcal{Q}-$moderate if there exists $C>0$ such that $u(x)\leq Cu(y)$ for all $x,y\in Q_T$ and all $T\in \mathcal{T}$. A $\mathcal{Q}-$moderate weight (derived from $u$) is a sequence $\{\omega_T\}_{T\in \mathcal{T}}:=\{u(x_T)\}_{T\in \mathcal{T}}$ with $x_T\in Q_T$ for all $T\in \mathcal{T}$. 
\end{Def}
For the rest of this article, we shall use the explicit choice $u(x):=1+\|x\|_2$ for the function $u$ in \Defref{PAPERC:Def:moderateweight}. Let us now define structured coverings \cite{Borup07} of the time domain.
\begin{Def}[Structured covering]\label{PAPERC:Def:SAC}
Given a family $\mathcal{T}=\{A_k(\cdot)+c_k\}_{k\in \bb{N}}$ of invertible affine transformations on $\bb{R}^d$, suppose there exist two bounded open sets $P\subset Q\subset \bb{R}^d$, with $P$ compactly contained in $Q$, such that
\begin{enumerate}
\item $\left\{P_T\right\}_{T\in \mathcal{T}}$ and $\left\{Q_T\right\}_{T\in \mathcal{T}}$ are admissible coverings.\label{PAPERC:Def:SAC1}

\item There exists a $\delta-$separated set $\{x_T\}_{T\in \mathcal{T}}\subset \bb{R}^d$, with $x_T\in Q_T$ for all $T\in \mathcal{T}$, such that $\{\omega_T\}_{T\in \mathcal{T}}:=\{1+\|x_T\|_2\}_{T\in \mathcal{T}}$ is a $\mathcal{Q}-$moderate weight.\label{PAPERC:Def:SAC2}
\end{enumerate}
Then we call $\mathcal{Q}=\{Q_T\}_{T\in \mathcal{T}}$ a \emph{structured  covering}.
\end{Def}
For a structured covering we have the associated concept of a \emph{bounded admissible partition of unity} (BAPU) \cite{Feichtinger1985}.
\begin{Def}[BAPU]\label{PAPERC:Def:Bapu}
Let $\mathcal{Q}=\{Q_T\}_{T\in \mathcal{T}}$ be a structured covering of $\bb{R}^d$. A BAPU subordinate to $\mathcal{Q}$ is a family of non-negative functions $\{\psi_T\}_{T\in \mathcal{T}}\subset C^\infty_c(\bb{R}^d)$ satisfying
\begin{enumerate}
\item $\text{supp}(\psi_T)\subset Q_T,\quad \forall T\in \mathcal{T}$.\label{PAPERC:Def:Bapu1}

\item $\displaystyle \sum_{T \in \mathcal{T}}\psi_T(x)=1, \quad \forall x\in \bb{R}^d$.\label{PAPERC:Def:Bapu2}
\end{enumerate}
\end{Def}
We note that the assumptions in \Defref{PAPERC:Def:Bapu} implies that the members of the BAPU are uniformly bounded, i.e., $\sup_{T\in \mathcal{T}}\norm{\psi_T}_{L^\infty}\leq 1$. 

Given a structured covering $\mathcal{Q}=\{Q_T\}_{T\in \mathcal{T}}$, we can always construct a subordinate BAPU. Choose a non-negative function $\Phi\in C^\infty_c(\bb{R}^d)$, with $\Phi(x)=1$ for all $x\in P$ and $\text{supp}(\Phi)\subset Q$, and define
\begin{equation*}
\psi_T(x):=\frac{\Phi(T^{-1}x)}{\sum_{T'\in \mathcal{T}}\Phi(T'^{-1}x)},\quad x\in \bb{R}^d,
\end{equation*}
for all $T\in \mathcal{T}$. With this construction, it is clear that \Defenuref{PAPERC:Def:Bapu}{PAPERC:Def:Bapu1} is satisfied. Further, since $\{P_T\}_{T\in \mathcal{T}}$ is an admissible covering, then $1\leq \sum_{T'\in \mathcal{T}}\Phi(T'^{-1}x)\leq n_0$ for all $x\in \bb{R}^d$ which shows that \Defenuref{PAPERC:Def:Bapu}{PAPERC:Def:Bapu2} holds. 
\begin{Rem}
We note that the assumption in \Defenuref{PAPERC:Def:SAC}{PAPERC:Def:SAC2} is not necessary for constructing a subordinate BAPU, however, the assumption is needed for proving \Theref{PAPERC:The:decomcovthe}.
\end{Rem}
Let $\mathcal{Q}=\{Q_T\}_{T\in \mathcal{T}}$ be a structured covering with $\mathcal{Q}-$moderate weight $\{\omega_T\}_{T\in \mathcal{T}}=\{1+\|x_T\|_2\}_{T\in \mathcal{T}}$ and BAPU $\{\psi_T\}_{T\in \mathcal{T}}$. For $s\in \bb{R}$ and $1\leq q\leq\infty$, we define the associated weighted sequence space 
\begin{equation*}
\ell^q_{\omega^s}(\mathcal{T}):=\left\{\{a_T\}_{T\in \mathcal{T}}\subset \bb{C}~\Big|~\norm{\{a_T\}_{T\in \mathcal{T}}}_{\ell^q_{\omega^s}}:=\norm{\{\omega_T^sa_T\}_{T\in \mathcal{T}}}_{\ell^q}<\infty\right\}.
\end{equation*}
Given $\{a_T\}_{T\in \mathcal{T}}\in \ell^q_{\omega^s}(\mathcal{T})$, we define $\{a_T^+\}_{T\in \mathcal{T}}$ by $a_T^+:=\sum_{T'\in \widetilde{T}}a_{T'}$. Since $\{\omega_T\}_{T\in \mathcal{T}}$ is $\mathcal{Q}-$moderate, $\{a_T\}_{T\in \mathcal{T}}\rightarrow \{a_T^+\}_{T\in \mathcal{T}}$ defines a bounded operator on $\ell^q_{\omega^s}(\mathcal{T})$ according to \cite[Remark 2.13 and Lemma 3.2]{Feichtinger1985}. Denoting its operator norm by $C_+$, we have 
\begin{equation}\label{PAPERC:eq:Operatornorm}
\norm{\left\{a_T^+\right\}_{T\in \mathcal{T}}}_{\ell^q_{\omega^s}}\leq C_+ \norm{\left\{a_T\right\}_{T\in \mathcal{T}}}_{\ell^q_{\omega^s}},\quad \forall \left\{a_T\right\}_{T\in \mathcal{T}}\in \ell^q_{\omega^s}(\mathcal{T}).
\end{equation}
We now define decomposition spaces as first introduced in \cite{Feichtinger1985}.
\begin{Def}[Decomposition space]\label{PAPERC:Def:DS}
Let $\mathcal{Q}=\{Q_T\}_{T\in \mathcal{T}}$ be a structured covering with $\mathcal{Q}-$moderate weight $\{\omega_T\}_{T\in \mathcal{T}}=\{1+\|x_T\|_2\}_{T\in \mathcal{T}}$ and BAPU $\{\psi_T\}_{T\in \mathcal{T}}$. For $s\in \bb{R}$ and $1\leq p,q\leq\infty$, we define the \emph{decomposition space} $D(\mathcal{Q},L^p,\ell^q_{\omega^s})$ as the set of distributions $f\in \mathcal{S}'(\bb{R}^d)$ satisfying
\begin{equation*}
\norm{f}_{D(\mathcal{Q},L^p,\ell^q_{\omega^s})}:=\norm{\left\{\norm{\psi_T f}_{L^p}\right\}_{T\in \mathcal{T}}}_{\ell^q_{\omega^s}}<\infty.
\end{equation*}	
\end{Def}
\begin{Rem}\label{PAPERC:Rem:equivBAPU}
According to \cite[Theorem 3.7]{Feichtinger1985}, $D(\mathcal{Q},L^p,\ell^q_{\omega^s})$ is independent of the particular choice of BAPU and different choices yield equivalent norms. Actually the results in \cite{Feichtinger1985} show that $D(\mathcal{Q},L^p,\ell^q_{\omega^s})$ is invariant under certain geometric modifications of $\mathcal{Q}$, but we will not go into detail here.
\end{Rem}
\begin{Rem}
In contrast to the approach taken in \cite{Ottosen2017Article1} (where the decomposition is performed on the frequency side), we do not allow $p,q<1$ in \Defref{PAPERC:Def:DS} since a simple consideration shows that  the resulting decomposition spaces would not be complete in this case.
\end{Rem}
We now consider some familiar examples of decomposition spaces. By standard arguments it is easy to verify that $D(\mathcal{Q},L^2,\ell^2)=L^2(\bb{R}^d)$ with equivalent norms for any structured covering $\mathcal{Q}$. The next example shows how to construct Wiener amalgam spaces.
\begin{Exa}\label{PAPERC:Exa:wieneramalgam}
Let $Q\subset \bb{R}^d$ be an open cube with center $0$ and side-length $r>1$. Define $\mathcal{T}:=\{T_k\}_{k\in \bb{Z}^d}$, with $T_kx:=x-k$ for all $k\in \bb{Z}^d$, and let $\{\omega_{T_k}\}_{T_k\in \mathcal{T}}=\{1+\|k\|_2\}_{T_k\in \mathcal{T}}$. With $\mathcal{Q}:=\{Q_{T_k}\}_{T_k\in \mathcal{T}}$, then $D(\mathcal{Q},L^p,\ell^q_{\omega^s})$ corresponds to the Wiener amalgam space $W(L^p,\ell^q_{\omega^s})$ for $s\in \bb{R}$ and $1\leq p,q\leq\infty$, see \cite{Feichtinger83Wiener} for further details.
\end{Exa}
Let us now prove the following important properties of decomposition spaces.
\begin{The}\label{PAPERC:The:decomcovthe}
Let $\mathcal{Q}=\{Q_T\}_{T\in\mathcal{T}}$ be a structured covering with $\mathcal{Q}-$moderate weight $\{\omega_T\}_{T\in \mathcal{T}}=\{1+\|x_T\|_2\}_{T\in \mathcal{T}}$ and subordinate BAPU $\{\psi_T\}_{T\in\mathcal{T}}$. For $s\in \bb{R}$ and $1\leq p,q\leq \infty$,
\begin{enumerate}
\item $\mathcal{S}(\bb{R}^d)\hookrightarrow D(\mathcal{Q},L^p,\ell^q_{\omega^s}) \hookrightarrow \mathcal{S}'(\bb{R}^d)$.\label{PAPERC:The:decomcovthe1}

\item $D(\mathcal{Q},L^p,\ell^q_{\omega^s})$ is a Banach space.\label{PAPERC:The:decomcovthe2}

\item If $p,q<\infty$, then $\mathcal{S}(\bb{R}^d)$ is dense in $D(\mathcal{Q},L^p,\ell^q_{\omega^s})$.\label{PAPERC:The:decomcovthe3}

\item If $p,q<\infty$, then the dual space of $D(\mathcal{Q},L^p,\ell^q_{\omega^s})$ can be identified with $D(\mathcal{Q},L^{p'},\ell^{q'}_{\omega^{-s}})$ with $1/p+1/p'=1$ and $1/q+1/q'=1$. \label{PAPERC:The:decomcovthe4}
\end{enumerate}
\end{The}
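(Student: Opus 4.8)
The plan is to treat the four assertions in order, reducing everything to standard facts about (vector-valued) weighted sequence spaces plus two elementary geometric consequences of the structured covering that I will use throughout. First, $\mathcal{Q}$-moderateness of $\{\omega_T\}_{T\in\mathcal{T}}$ forces $1+\norm{x}_2\asymp\omega_T$ for every $x\in Q_T$, whence $\operatorname{diam}(Q_T)\lesssim\omega_T$ and $\abs{Q_T}\lesssim\omega_T^d$. Second, the $\delta$-separation of $\{x_T\}_{T\in\mathcal{T}}$ from \Defenuref{PAPERC:Def:SAC}{PAPERC:Def:SAC2} implies $\#\{T\in\mathcal{T}:\omega_T\leq R\}\lesssim R^d$, and hence $\sum_{T\in\mathcal{T}}\omega_T^{-N}<\infty$ whenever $N>d$. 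Much of what follows is the adaptation to the time side of arguments in \cite{Feichtinger1985}.

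For \eqref{PAPERC:The:decomcovthe1}, the embedding $\mathcal{S}(\bb{R}^d)\hookrightarrow D(\mathcal{Q},L^p,\ell^q_{\omega^s})$ follows by writing $\norm{\psi_T f}_{L^p}\leq\norm{f\chi_{Q_T}}_{L^p}$, splitting a rapid-decay estimate $\abs{f(x)}\lesssim(1+\norm{x}_2)^{-M}$ on $Q_T$ into one factor $\omega_T^{-M/2}$ and one globally $L^p$-integrable factor, and invoking $\sum_T\omega_T^{-N}<\infty$ with $M$ large; this yields $\norm{f}_{D(\mathcal{Q},L^p,\ell^q_{\omega^s})}\lesssim p_M(f)$ for a Schwartz seminorm $p_M$. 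For $D(\mathcal{Q},L^p,\ell^q_{\omega^s})\hookrightarrow\mathcal{S}'(\bb{R}^d)$ note $f=\sum_T\psi_T f$ in $\mathcal{D}'(\bb{R}^d)$ since the sum is locally finite; testing against $\varphi\in C^\infty_c(\bb{R}^d)$, applying Hölder on each $Q_T$ with $\norm{\varphi}_{L^{p'}(Q_T)}\lesssim\abs{Q_T}^{1/p'}\sup_{Q_T}\abs{\varphi}\lesssim\omega_T^{d/p'-M}p_M(\varphi)$, and then Hölder on the sequences gives $\abs{\scalarp{f,\varphi}}\lesssim\norm{f}_{D(\mathcal{Q},L^p,\ell^q_{\omega^s})}p_M(\varphi)$ for $M$ large, which extends to all of $\mathcal{S}(\bb{R}^d)$ by density of $C^\infty_c(\bb{R}^d)$. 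Assertion \eqref{PAPERC:The:decomcovthe2} is then routine: $\norm{\cdot}_{D(\mathcal{Q},L^p,\ell^q_{\omega^s})}$ is a norm because $\norm{f}=0$ forces every $\psi_T f=0$, hence $f=\sum_T\psi_T f=0$; for completeness, a Cauchy sequence $(f_n)$ converges in $\mathcal{S}'$ by \eqref{PAPERC:The:decomcovthe1}, each $(\psi_T f_n)_n$ is Cauchy in $L^p$ (since $\norm{\psi_T g}_{L^p}\leq\omega_T^{-s}\norm{g}_{D(\mathcal{Q},L^p,\ell^q_{\omega^s})}$) with limit necessarily $\psi_T f$, and a Fatou argument for the $\ell^q_{\omega^s}$-sum shows $f\in D(\mathcal{Q},L^p,\ell^q_{\omega^s})$ with $\norm{f-f_n}\to0$.

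For \eqref{PAPERC:The:decomcovthe3}, assume $p,q<\infty$, fix a finite exhaustion $\mathcal{T}_N\uparrow\mathcal{T}$, and set $f_N:=\sum_{T\in\mathcal{T}_N}\psi_T f\in L^p(\bb{R}^d)$ (compactly supported). Since $\psi_{T'}(f-f_N)=\psi_{T'}f\cdot\bigl(1-\sum_{T\in\mathcal{T}_N}\psi_T\bigr)$ vanishes whenever $\widetilde{T'}\subset\mathcal{T}_N$ and is otherwise bounded in $L^p$ by $\norm{\psi_{T'}f}_{L^p}$, dominated convergence for the ($q<\infty$) sum gives $\norm{f-f_N}_{D(\mathcal{Q},L^p,\ell^q_{\omega^s})}\to0$. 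Each $f_N$ is supported in a fixed compact set met by only finitely many $Q_T$, so mollification produces $f_N\ast\rho_\varepsilon\in C^\infty_c(\bb{R}^d)\subset\mathcal{S}(\bb{R}^d)$ with $f_N\ast\rho_\varepsilon\to f_N$ in $L^p$ and supports in a fixed compact set; only finitely many $T$ contribute to $\norm{f_N\ast\rho_\varepsilon-f_N}_{D(\mathcal{Q},L^p,\ell^q_{\omega^s})}$, which is therefore $\lesssim\norm{f_N\ast\rho_\varepsilon-f_N}_{L^p}\to0$. Combining the two approximations proves the density.

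Assertion \eqref{PAPERC:The:decomcovthe4} carries the real weight. For $g\in D(\mathcal{Q},L^{p'},\ell^{q'}_{\omega^{-s}})$ put $L_g(f):=\sum_T\int_{\bb{R}^d}\psi_T f\,g$; each term equals the finite sum $\sum_{T'\in\widetilde{T}}\int\psi_T\psi_{T'}fg$, and $\abs{L_g(f)}\leq\sum_T\norm{\psi_T f}_{L^p}b_T^{+}$ with $b_{T'}:=\norm{\psi_{T'}g}_{L^{p'}}$, so \eqref{PAPERC:eq:Operatornorm} together with Hölder on $\ell^q_{\omega^s}\times\ell^{q'}_{\omega^{-s}}$ yields $\norm{L_g}\leq C_+\norm{g}_{D(\mathcal{Q},L^{p'},\ell^{q'}_{\omega^{-s}})}$; moreover $L_g$ restricted to $C^\infty_c(\bb{R}^d)$ is the distributional pairing with $g$, so $g\mapsto L_g$ is injective. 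For surjectivity I would realize $D(\mathcal{Q},L^p,\ell^q_{\omega^s})$ as a retract of the vector-valued sequence space $Y:=\ell^q_{\omega^s}(\mathcal{T};L^p_{Q_T})$, where $L^p_{Q_T}$ denotes the $L^p$-functions supported in $Q_T$: the analysis map $\iota f:=\{\psi_T f\}_T$ is isometric by \Defref{PAPERC:Def:DS}, while $R(\{F_T\}_T):=\sum_T F_T$ satisfies $R\iota=\mathrm{id}$ and $\norm{R}\leq C_+$ by \eqref{PAPERC:eq:Operatornorm}. Hence every $L\in D(\mathcal{Q},L^p,\ell^q_{\omega^s})^{*}$ factors as $L=\iota^{*}\Lambda$ with $\norm{\Lambda}\leq C_+\norm{L}$; since $p,q<\infty$, $Y^{*}=\ell^{q'}_{\omega^{-s}}(\mathcal{T};L^{p'}(Q_T))$, so $\Lambda$ is represented by $\{G_T\}_T$ with $G_T\in L^{p'}$ supported in $Q_T$ and $\norm{\{\omega_T^{-s}\norm{G_T}_{L^{p'}}\}_T}_{\ell^{q'}}=\norm{\Lambda}$. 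Setting $g:=\sum_T\psi_T G_T$ and applying \eqref{PAPERC:eq:Operatornorm} with $q',\omega^{-s}$ in place of $q,\omega^{s}$, one gets $g\in D(\mathcal{Q},L^{p'},\ell^{q'}_{\omega^{-s}})$ with $\norm{g}\lesssim\norm{L}$, and for $f\in C^\infty_c(\bb{R}^d)$ one computes $L(f)=\Lambda(\iota f)=\sum_T\int\psi_T f\,G_T=\int f\sum_T\psi_T G_T=\int fg=L_g(f)$, so $L=L_g$ by \eqref{PAPERC:The:decomcovthe3} and continuity. Thus $g\mapsto L_g$ is a bounded bijection of Banach spaces, hence a topological isomorphism, giving the asserted identification. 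I expect the bookkeeping in this last step to be the main obstacle, in particular the duality $Y^{*}=\ell^{q'}_{\omega^{-s}}(\mathcal{T};L^{p'}(Q_T))$ (the one place where the hypothesis $p,q<\infty$ is genuinely needed) and the verification that the abstract representing sequence $\{G_T\}$ reassembles, via $g=\sum_T\psi_T G_T$, to an element of $D(\mathcal{Q},L^{p'},\ell^{q'}_{\omega^{-s}})$ whose functional agrees with $L$ on a dense subspace; everything else reduces to the two geometric facts of the first paragraph and \eqref{PAPERC:eq:Operatornorm}.
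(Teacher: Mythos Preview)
Your proof is correct and follows the same overall architecture as the paper's: Schwartz embeddings via weighted decay, completeness from convergence in $\mathcal{S}'$ plus a Fatou step, density by truncation-then-mollification, and duality by realising $D(\mathcal{Q},L^p,\ell^q_{\omega^s})$ inside a vector-valued sequence space whose dual is known. Two organisational differences are worth noting. For \eqref{PAPERC:The:decomcovthe1} the paper first reduces to $q=\infty$ via an intermediate embedding $D^{s+\varepsilon}_{p,\infty}\hookrightarrow D^s_{p,q}\hookrightarrow D^s_{p,\infty}$ (imported from \cite{Borup08}) and then proves only $\mathcal{S}\hookrightarrow D^s_{p,\infty}\hookrightarrow\mathcal{S}'$; you instead handle all $q$ at once using $\sum_T\omega_T^{-N}<\infty$, which is slightly more self-contained. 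For \eqref{PAPERC:The:decomcovthe4} the paper extends a functional to $\ell^q(L^p)$ by Hahn--Banach and then invokes the duality from \cite{Triebel2010}, whereas your retraction $R\iota=\mathrm{id}$ produces the extension explicitly as $\Lambda=L\circ R$; this is a bit cleaner but functionally identical, and your reassembly $g=\sum_T\psi_T G_T$ is exactly the paper's $\sum_T\sigma_T\omega_T^s\psi_T$ with the weight absorbed into the sequence-space norm.
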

The proof of \Theref{PAPERC:The:decomcovthe} can be found in \Appref{PAPERC:App:1}. In the next section we construct decomposition spaces, which are compatible with the structure of painless NSGFs with flexible time resolution.

\section{Nonstationary Gabor frames}\label{PAPERC:Sec:3}
In this section, we construct NSGFs with flexible time resolution using the notation of \cite{Balazs11}. Given a set of window functions $\{g_n\}_{n\in \bb{Z}^d}\subset L^2(\bb{R}^d)$, with corresponding frequency sampling steps $b_n>0$, then for $m,n\in \bb{Z}^d$ we define atoms of the form
\begin{equation*}
g_{m,n}(x):=g_n(x)e^{2\pi imb_n\cdot x},\quad x\in \bb{R}^d.
\end{equation*}
The choice of $\bb{Z}^d$ as index set for $n$ is only a matter of notational convenience; any countable index set would do. 
\begin{Exa}
With $g_n(x):=g(x-na)$ and $b_n:=b$ for all $n\in \bb{Z}^d$ we get
\begin{equation*}
g_{m,n}(x):=g(x-na)e^{2\pi imb\cdot x},\quad x\in \bb{R}^d,
\end{equation*}
which just corresponds to a standard Gabor system.
\end{Exa}
If $\sum_{m,n}|\langle f,g_{m,n}\rangle|^2\asymp \|f\|_2^2$ for all $f\in L^2(\bb{R}^d)$, we refer to $\{g_{m,n}\}_{m,n}$ as an NSGF. For an NSGF $\{g_{m,n}\}_{m,n}$, the frame operator 
\begin{equation*}
Sf=\sum_{m,n\in \bb{Z}^d}\scalarp{f,g_{m,n}}g_{m,n},\quad f\in L^2(\bb{R}^d),
\end{equation*}
is invertible and we have the expansions
\begin{equation*}
f=\sum_{m,n\in \bb{Z}^d}\scalarp{f,g_{m,n}}\tilde{g}_{m,n},\quad  f\in L^2(\bb{R}^d),
\end{equation*}
with $\{\tilde{g}_{m,n}\}_{m,n}:=\{S^{-1}g_{m,n}\}_{m,n}$ being the canonical dual frame of $\{g_{m,n}\}_{m,n}$ \cite{Christensen2016}. For notational convenience we define $G(x):=\sum_{n\in \bb{Z}^d}1/b_n^d\abs{g_n(x)}^2$. With this notation we have the following result \cite[Theorem 1]{Balazs11}.
\begin{The}\label{PAPERC:The:NSGFTheorem}
Let $\{g_n\}_{n\in \bb{Z}^d}\subset L^2(\bb{R}^d)$ with frequency sampling steps $\{b_n\}_{n\in \bb{Z}^d}$,  $b_n>0$ for all $n\in \bb{Z}^d$. Assuming supp$(g_n)\subseteq [0,\frac{1}{b_n}]^d+a_n$, with $a_n\in \bb{R}^d$ for all $n\in \bb{Z}^d$, the frame operator for the system
\begin{equation*}
g_{m,n}(x)=g_n(x)e^{2\pi imb_n\cdot x},\quad \forall m,n\in \bb{Z}^d,\quad x\in \bb{R}^d,
\end{equation*}
is given by
\begin{equation*}
Sf(x)=G(x)f(x),\quad f\in L^2(\bb{R}^d).
\end{equation*}
\begin{sloppypar}
The system $\{g_{m,n}\}_{m,n\in \bb{Z}^d}$ constitutes a frame for $L^2(\bb{R}^d)$, with frame-bounds $0<A\leq B<\infty$, if and only if 
\end{sloppypar}
\begin{equation}\label{PAPERC:eq:nsgtcharacterization}
A\leq G(x)\leq B,\quad \text{for a.e. } x\in \bb{R}^d,
\end{equation}
and the canonical dual frame is then given by
\begin{equation}\label{PAPERC:eq:nsgtdualexpression}
\tilde{g}_{m,n}(x)=\frac{g_{n}(x)}{G(x)}e^{2\pi imb_n\cdot x},\quad x\in \bb{R}^d.
\end{equation}
\end{The}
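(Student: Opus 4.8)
The plan is to reduce everything to the painless (local Fourier series) mechanism. Fix $n\in\bb{Z}^d$ and write $C_n:=[0,1/b_n]^d+a_n$, so that $\supp(g_n)\subseteq C_n$ and $\abs{C_n}=b_n^{-d}$. After the substitution $x\mapsto b_n(x-a_n)$, the system $\{e_{m,n}\}_{m\in\bb{Z}^d}$ with $e_{m,n}(x):=b_n^{d/2}e^{2\pi imb_n\cdot x}$ is a unitary image of the standard Fourier basis of $L^2([0,1]^d)$, hence an orthonormal basis of $L^2(C_n)$. Since $g_n$ vanishes off $C_n$, for $f\in L^2(\bb{R}^d)$ the product $f\overline{g_n}$ lies in $L^1(C_n)$ and $\scalarp{f,g_{m,n}}=b_n^{-d/2}\scalarp{f\overline{g_n},e_{m,n}}_{L^2(C_n)}$; that is, the coefficients in $m$ are, up to normalization, the Fourier coefficients of the local product $f\overline{g_n}$.

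The first step is the quadratic form identity. Parseval's identity on $L^2(C_n)$ gives, as an identity in $[0,\infty]$,
\[
\sum_{m\in\bb{Z}^d}\abs{\scalarp{f,g_{m,n}}}^2=b_n^{-d}\norm{f\overline{g_n}}_{L^2(C_n)}^2=\frac{1}{b_n^d}\int_{\bb{R}^d}\abs{f(x)}^2\abs{g_n(x)}^2\,dx ,
\]
both sides being simultaneously finite or infinite. Summing over $n$ and interchanging sum and integral by Tonelli (every term is nonnegative) yields
\[
\sum_{m,n\in\bb{Z}^d}\abs{\scalarp{f,g_{m,n}}}^2=\int_{\bb{R}^d}\abs{f(x)}^2G(x)\,dx ,\qquad f\in L^2(\bb{R}^d).
\]
Consequently $\{g_{m,n}\}_{m,n}$ is a frame with bounds $0<A\le B<\infty$ iff $A\norm{f}_2^2\le\int\abs{f}^2G\le B\norm{f}_2^2$ for all $f\in L^2(\bb{R}^d)$; testing against indicator functions of sets on which $G$ would otherwise violate a bound shows that this is equivalent to $A\le G(x)\le B$ for a.e.\ $x$, i.e.\ \eqref{PAPERC:eq:nsgtcharacterization}.

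For the frame operator, assume \eqref{PAPERC:eq:nsgtcharacterization}; then $G\in L^\infty$, so the system is Bessel and $S$ is bounded, positive and self-adjoint, and $\sum_{m,n}\scalarp{f,g_{m,n}}g_{m,n}$ converges unconditionally. Let $M_G$ be multiplication by $G$ on $L^2(\bb{R}^d)$ (bounded since $G\le B$). The identity above reads $\scalarp{Sf,f}=\scalarp{M_Gf,f}$ for all $f\in L^2(\bb{R}^d)$, so polarization forces $S=M_G$, i.e.\ $Sf(x)=G(x)f(x)$. (Alternatively, and without the frame hypothesis, one can carry out the $m$-summation locally: the orthonormal expansion on $C_n$ gives $\sum_m\scalarp{f,g_{m,n}}g_{m,n}=b_n^{-d}\abs{g_n}^2f$ in $L^2$, and summing over $n$ gives $Sf=Gf$.) Finally, since $A\le G\le B$ a.e., $M_G$ is boundedly invertible with $M_G^{-1}=M_{1/G}$; hence $S^{-1}=M_{1/G}$ and $\tilde g_{m,n}=S^{-1}g_{m,n}=\frac{g_n}{G}e^{2\pi imb_n\cdot x}$, which is \eqref{PAPERC:eq:nsgtdualexpression}.

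I do not expect a deep obstacle: the statement is the nonstationary version of painless nonorthogonal expansions, and all the content is in the orthonormality and completeness of the local exponential systems $\{b_n^{d/2}e^{2\pi imb_n\cdot x}\}_m$ on $C_n$, together with careful tracking of the normalizing powers of $b_n$. The only two points requiring a little care are the interchange of summation and integration (legitimate by Tonelli, since the quadratic form identity involves only nonnegative terms) and the passage from that quadratic form identity to the operator identity $S=M_G$ (legitimate by polarization once the Bessel hypothesis makes $S$ a bounded operator).
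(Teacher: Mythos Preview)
The paper does not actually prove this theorem; it is quoted verbatim as \cite[Theorem~1]{Balazs11} and used as a black box. Your argument is correct and is precisely the standard painless mechanism (local Fourier series on each support cube, Parseval, Tonelli, then $S=M_G$ via polarization), which is essentially how the result is established in \cite{Balazs11} and, in the stationary case, in \cite{Daubechies86}.
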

\begin{Rem}
We note that the canonical dual frame in \eqref{PAPERC:eq:nsgtdualexpression} posses the same structure as the original frame, which is a property not shared by general NSGFs. We also note that the canonical tight frame can be obtained by taking the square root of the denominator in \eqref{PAPERC:eq:nsgtdualexpression}. 
\end{Rem}
Traditionally, an NSGF satisfying the assumptions of \Theref{PAPERC:The:NSGFTheorem} is called a \emph{painless} NSGF, referring to the fact that the frame operator is a simple multiplication operator. This terminology is adopted from the classical \emph{painless nonorthogonal expansions} \cite{Daubechies86}, which corresponds to the painless case for classical Gabor frames. By slight abuse of notation we use the term "painless" to denote the NSGFs satisfying \Defref{PAPERC:Def:PainlessNSGF} below. In order to properly formulate this definition, we first need some preliminary notation which we adopt from \cite{Ottosen2017Article1}.

Let $\{g_n\}_{n\in \bb{Z}^d}\subset L^2(\bb{R}^d)$ satisfy the assumptions in \Theref{PAPERC:The:NSGFTheorem}. Given $C_*>0$ we denote by $\{I_n\}_{n\in \bb{Z}^d}$ the open cubes
\begin{equation}\label{PAPERC:eq:NSGFopencubes}
I_n:=\left(-\varepsilon_n,\frac{1}{b_n}+\varepsilon_n\right)^d+a_n,\quad \forall n\in \bb{Z}^d,
\end{equation}
with $\varepsilon_n:=C_*/b_n$ for all $n\in \bb{Z}^d$. We note that supp$(g_{m,n})\subset I_n$ for all $m,n\in \bb{Z}^d$. For $n\in \bb{Z}^d$ we define
\begin{equation*}
\widetilde{n}:=\left\{n'\in \bb{Z}^d~\big| ~ I_{n'}\cap I_{n}\neq \emptyset\right\},
\end{equation*}
using the notation of \eqref{PAPERC:eq:neighbouringindices}. 
\begin{Def}[Painless NSGF]\label{PAPERC:Def:PainlessNSGF}
Let $\{g_n\}_{n\in \bb{Z}^d}\subset L^2(\bb{R}^d)$ satisfy the assumptions in \Theref{PAPERC:The:NSGFTheorem}, and assume further that,
\begin{enumerate}
\item There exists $C_*>0$ and $n_0\in \bb{N}$, such that the open cubes $\{I_n\}_{n\in \bb{Z}^d}$, given in \eqref{PAPERC:eq:NSGFopencubes}, satisfy $|\widetilde{n}|\leq n_0$ uniformly for all $n\in \bb{Z}^d$.\label{PAPERC:Def:PainlessNSGF1}
\begin{sloppypar}
\item $\{a_n\}_{n\in \bb{Z}^d}$ is a $\delta-$separated set and $\{1+\|a_n\|_2\}_{n\in \bb{Z}^d}$ constitutes a $\{I_n\}_{n\in \bb{Z}^d}-$moderate weight.\label{PAPERC:Def:PainlessNSGF2}
\end{sloppypar}
\item The $g_n$'s are continuous, real valued and satisfy\label{PAPERC:Def:PainlessNSGF3}
\begin{equation*}
g_n(x)\leq C b_n^{d/2}\chi_{I_n}(x),\quad \text{for all }n\in \bb{Z}^d,
\end{equation*}
for some uniform constant $C>0$.
\end{enumerate}
Then we refer to $\{g_{m,n}\}_{m,n\in \bb{Z}^d}$ as a \emph{painless} NSGF.
\end{Def}
The assumptions in \Defref{PAPERC:Def:PainlessNSGF} are easily satisfied, but the support condition in \Theref{PAPERC:The:NSGFTheorem} is rather restrictive and implies a certain redundancy of the system. Nevertheless, we must assume some structure on the dual frame, which is not provided by general NSGFs. We choose the framework of painless NSGFs and base our arguments on the fact that the dual frame possess the same structure as the original frame. We expect it is possible to extend the theory developed in this article to a more general settings by imposing general existence results for NSGFs \cite{Holighaus2014, Dorfler2014,Voigtlaender2016}. We now provide a simple example of a set of window functions satisfying \Defenuref{PAPERC:Def:PainlessNSGF}{PAPERC:Def:PainlessNSGF3}.
\begin{Exa}
Choose a continuous real valued function $\varphi\in L^2(\bb{R}^d)\setminus\{0\}$ with supp$(\varphi)\subseteq [0,1]^d$. For $n\in \bb{Z}^d$ define
\begin{equation*}
g_n(x):=b_n^{d/2}\varphi(b_n(x-a_n)),\quad x\in \bb{R}^d, 
\end{equation*}
with $a_n\in \bb{R}^d$ and $b_n>0$. Then supp$(g_n)\subseteq [0,\frac{1}{b_n}]+a_n$ and \Defenuref{PAPERC:Def:PainlessNSGF}{PAPERC:Def:PainlessNSGF3} is satisfied.
\end{Exa}
Following the approach taken in \cite{Ottosen2017Article1}, we define $Q:=(0,1)^d$ together with the set of affine transformations $\mathcal{T}:=\{A_n(\cdot)+c_n\}_{n\in \bb{Z}^d}$ with
\begin{equation*}
A_n:=\left(2\varepsilon_n+\frac{1}{b_n}\right)\cdot I_d,\quad\text{and}\quad (c_n)_j:=-\varepsilon_n+(a_n)_j,\quad 1\leq j\leq d.
\end{equation*}
It is then easily shown that $\mathcal{Q}:=\{Q_T\}_{T\in \mathcal{T}}=\{I_n\}_{n\in \bb{Z}^d}$ forms a structured covering of $\bb{R}^d$ \cite[Lemma 4.1]{Ottosen2017Article1}. Given $s\in \bb{R}$ and $1\leq p,q\leq \infty$, we may therefore construct the associated decomposition space $D(\mathcal{Q},L^p,\ell^q_{\omega^s})$ with $\{\omega_T\}_{T\in \mathcal{T}}:=\{1+\|a_n\|_2\}_{n\in \bb{Z}^d}$. 
\begin{Exa}
Let $\{g_{m,n}\}_{m\in \bb{Z}^d,n\in \bb{Z}^d}$ be a painless NSGF according to \Defref{PAPERC:Def:PainlessNSGF}. Assume additionally that $K:=\inf\{b_n\}_{n\in \bb{Z}^d}>0$ and that \Defenuref{PAPERC:Def:PainlessNSGF}{PAPERC:Def:PainlessNSGF1} and \Defenuref{PAPERC:Def:PainlessNSGF}{PAPERC:Def:PainlessNSGF2}  hold for the larger cubes $K_n:=(-\varepsilon,1/K+\varepsilon)^d+a_n$ for some $\varepsilon>0$. Defining $Q:=(0,1)^d$ and $\mathcal{T}:=\{A_n(\cdot)+c_n\}_{n\in \bb{Z}^d}$, with
\begin{equation*}
A_n:=\left(2\varepsilon+\frac{1}{K}\right)\cdot I_d,\quad\text{and}\quad (c_n)_j:=-\varepsilon+(a_n)_j,\quad 1\leq j\leq d,
\end{equation*}
we obtain the structured covering $\mathcal{Q}:=\{K_n\}_{n\in \bb{Z}^d}$. In this special case the associated decomposition space is the Wiener amalgam space $W(L^p,\ell^q_{\omega^s})$ for $s\in \bb{R}$ and $1\leq p,q\leq \infty$ (cf. \Exaref{PAPERC:Exa:wieneramalgam}).
\end{Exa}
For the rest of this article, we write $\{g_{m,T}\}_{m\in \bb{Z}^d,T\in \mathcal{T}}$ for a painless NSGF with associated structured covering $\mathcal{Q}:=\{Q_T\}_{T\in \mathcal{T}}$. With this notation, then supp$(g_{m,T})\subset Q_T$ for all $m\in \bb{Z}^d$ and all $T\in \mathcal{T}$. Similarly we write $\{\omega_T\}_{T\in \mathcal{T}}=\{1+\|a_T\|_2\}_{T\in \mathcal{T}}$ for the associated weight function.

\section{Characterization of decomposition spaces}\label{PAPERC:Sec:4}
Using the notation of \cite{Borup07} we define the sequence space $d(\mathcal{Q},\ell^p,\ell^q_{\omega^s})$ as the set of coefficients $\{c_{m,T}\}_{m\in \bb{Z}^d,T\in \mathcal{T}}\subset \bb{C}$ satisfying
\begin{equation*}
\norm{\{c_{m,T}\}_{m\in \bb{Z}^d,T\in \mathcal{T}}}_{d(\mathcal{Q},\ell^p,\ell^q_{\omega^s})}:=\norm{\left\{\norm{\left\{c_{m,T}\right\}_{m\in \bb{Z}^d}}_{\ell^p}\right\}_{T\in \mathcal{T}}}_{\ell^q_{\omega^s}}<\infty,
\end{equation*}
for $s\in \bb{R}$ and $1\leq p,q\leq\infty$. We can now prove the following important stability result.
\begin{The}\label{PAPERC:The:characterization}
Let $\{g_{m,T}\}_{m\in \bb{Z}^d,T\in \mathcal{T}}$ be a painless NSGF with associated structured covering $\mathcal{Q}=\{Q_T\}_{T\in \mathcal{T}}$ and weight function $\{\omega_T\}_{T\in \mathcal{T}}=\{1+\|a_T\|_2\}_{T\in \mathcal{T}}$. Fix $s\in \bb{R}$, $1\leq p\leq 2$ and let $p':=p/(p-1)$. For $f\in D(\mathcal{Q},L^p,\ell^q_{\omega^s})$ and $1\leq q\leq \infty$, 
\begin{equation}
\norm{\left\{\scalarp{f,g_{m,T}}\right\}_{m\in \bb{Z}^d,T\in \mathcal{T}}}_{d(\mathcal{Q},\ell^{p'},\ell^q_{\omega^s})}\leq C\norm{f}_{D(\mathcal{Q},L^p,\ell^q_{\omega^s})},\label{PAPERC:eq:characterization1}
\end{equation}
and for $h\in D(\mathcal{Q},L^{p'},\ell^q_{\omega^s})$ and $1\leq q< \infty$, 
\begin{equation}
\norm{h}_{D(\mathcal{Q},L^{p'},\ell^q_{\omega^s})}\leq C' \norm{\left\{\scalarp{h,g_{m,T}}\right\}_{m\in \bb{Z}^d,T\in \mathcal{T}}}_{d(\mathcal{Q},\ell^p,\ell^q_{\omega^s})}.\label{PAPERC:eq:characterization2}
\end{equation}
\end{The}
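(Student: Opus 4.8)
The plan is to exploit the fact that, for a painless NSGF, both the window functions and their duals are supported in the cubes $Q_T$ of the structured covering, so that the frame coefficients $\scalarp{f,g_{m,T}}$ for fixed $T$ are essentially the Fourier coefficients of the localized piece $g_n f$ (up to the normalization $b_n$) on a cube of side $1/b_n$. Concretely, for fixed $T$ with associated $n$, I would write $\scalarp{f,g_{m,T}}=\int g_n(x)\overline{f(x)}e^{-2\pi i mb_n\cdot x}dx$ and recognize this, after rescaling $x\mapsto x/b_n+a_n$, as a Fourier coefficient of the function $x\mapsto b_n^{-d}g_n(x/b_n+a_n)\overline{f(x/b_n+a_n)}$ on the unit cube (times a constant depending on $b_n$). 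The two inequalities then come down to the Hausdorff--Young inequality and its dual applied on each cube, glued together via the $\ell^q_{\omega^s}$ outer norm.

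For \eqref{PAPERC:eq:characterization1}: fix $T\leftrightarrow n$. By Hausdorff--Young for Fourier coefficients (valid for $1\le p\le 2$, mapping $L^p$ of the cube to $\ell^{p'}$), $\norm{\{\scalarp{f,g_{m,T}}\}_{m}}_{\ell^{p'}}\lesssim b_n^{d(1-1/p)}\norm{g_n f}_{L^p}$, where the power of $b_n$ tracks the volume $|Q_T|\asymp b_n^{-d}$ and the rescaling of the exponentials. Since $|g_n(x)|\le Cb_n^{d/2}\chi_{I_n}(x)$ by \Defenuref{PAPERC:Def:PainlessNSGF}{PAPERC:Def:PainlessNSGF3}, and since $Q_T=I_n$, we get $\norm{g_n f}_{L^p}\lesssim b_n^{d/2}\norm{\chi_{Q_T}f}_{L^p}$. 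Now I would compare $\norm{\chi_{Q_T}f}_{L^p}$ with $\norm{\psi_T f}_{L^p}$ for a BAPU $\{\psi_T\}$: since $\chi_{Q_T}=\chi_{Q_T}\sum_{T'\in\widetilde T}\psi_{T'}$ (the support of $\psi_{T'}$ meets $Q_T$ only for $T'\in\widetilde T$), we have $\norm{\chi_{Q_T}f}_{L^p}\le\sum_{T'\in\widetilde T}\norm{\psi_{T'}f}_{L^p}$, i.e. the pointwise bound $\norm{\chi_{Q_T}f}_{L^p}\le a_T^{+}$ in the notation of \eqref{PAPERC:eq:Operatornorm} with $a_{T'}:=\norm{\psi_{T'}f}_{L^p}$. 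Collecting the $b_n$ powers gives an overall factor $b_n^{d/2+d/2-d}=b_n^{0}=1$, so there is no leftover weight dependence on $b_n$ — this cancellation is the point of the painless normalization. Applying the $\ell^q_{\omega^s}$ norm in $T$ and invoking the boundedness of $\{a_T\}\mapsto\{a_T^+\}$ from \eqref{PAPERC:eq:Operatornorm} yields \eqref{PAPERC:eq:characterization1} with $C\asymp CC_+$.

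For \eqref{PAPERC:eq:characterization2}: here I would use the reconstruction formula $h=\sum_{m,T}\scalarp{h,g_{m,T}}\tilde g_{m,T}$ together with the explicit dual \eqref{PAPERC:eq:nsgtdualexpression}, $\tilde g_{m,T}(x)=G(x)^{-1}g_n(x)e^{2\pi i mb_n\cdot x}$, which is again supported in $Q_T$ and bounded by $A^{-1}|g_n(x)|\le CA^{-1}b_n^{d/2}\chi_{Q_T}$ using \eqref{PAPERC:eq:nsgtcharacterization}. Fix $T\leftrightarrow n$ and estimate $\norm{\psi_T h}_{L^p{}'}$: since $\psi_T$ is supported in $Q_T$ which meets $Q_{T'}$ only for $T'\in\widetilde T$, only the terms with $T'\in\widetilde T$ contribute, so $\norm{\psi_T h}_{L^{p'}}\le\sum_{T'\in\widetilde T}\norm{\psi_T\sum_{m}\scalarp{h,g_{m,T'}}\tilde g_{m,T'}}_{L^{p'}}\lesssim b_n^{d/2}\sum_{T'\in\widetilde T}\norm{\sum_{m}\scalarp{h,g_{m,T'}}e^{2\pi i mb_{n'}\cdot x}}_{L^{p'}(Q_{T'})}$ (after bounding $|\psi_T|\le1$, $|\tilde g_{m,T'}|\lesssim b_{n'}^{d/2}\chi_{Q_{T'}}$, and using that on $\widetilde T$ the sidelengths $b_n,b_{n'}$ are comparable by \Defenuref{PAPERC:Def:PainlessNSGF}{PAPERC:Def:PainlessNSGF2}). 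By the dual Hausdorff--Young inequality (for $1\le p\le2$, $\ell^p\to L^{p'}$ on the cube), $\norm{\sum_m c_m e^{2\pi i mb_{n'}\cdot x}}_{L^{p'}(Q_{T'})}\lesssim b_{n'}^{-d(1-1/p)}\norm{\{c_m\}}_{\ell^p}$. Again the $b$-powers telescope to a constant, and I arrive at $\norm{\psi_T h}_{L^{p'}}\lesssim\sum_{T'\in\widetilde T}\norm{\{\scalarp{h,g_{m,T'}}\}_m}_{\ell^p}$, i.e. another $a_T^+$-type bound; taking $\ell^q_{\omega^s}$ norms and applying \eqref{PAPERC:eq:Operatornorm} finishes it. The restriction $q<\infty$ enters because the reconstruction series must converge appropriately (density of $\mathcal S$, \Theenuref{PAPERC:The:decomcovthe}{PAPERC:The:decomcovthe3}), so one first proves the estimate for $h\in\mathcal S$ and extends by density.

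The main obstacle is making the two Hausdorff--Young steps genuinely uniform in $T$: one must verify that the constants in the Fourier-coefficient Hausdorff--Young inequalities on the cubes $Q_T$, after the rescaling that normalizes them to the unit cube, depend only on $p$ and $d$ and not on $b_n$ or $a_n$ — this is where the precise bookkeeping of the $b_n^{d/2}$ factors from $g_n$, the $b_n^{-d}$ volume of $Q_T$, and the $b_n$-dilated exponentials $e^{2\pi i mb_n\cdot x}$ has to cancel exactly. A secondary technical point is the passage from $\chi_{Q_T}$-localization to BAPU-localization (and back): one needs the finite-overlap property $|\widetilde T|\le n_0$ of the covering together with the $\mathcal Q$-moderateness of $\{\omega_T\}$ so that the replacement $T\mapsto\widetilde T$ costs only the bounded operator $\{a_T\}\mapsto\{a_T^+\}$ of \eqref{PAPERC:eq:Operatornorm}, and one must check that comparing $\norm{\psi_T f}_{L^p}$ with the piecewise estimates does not lose anything — this is routine given \Defref{PAPERC:Def:SAC} and \Remref{PAPERC:Rem:equivBAPU}, but it is the glue holding the per-cube estimates together.
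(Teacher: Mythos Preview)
Your argument for \eqref{PAPERC:eq:characterization1} matches the paper's: localize via the BAPU (the paper uses $\widetilde{\psi_T}\equiv 1$ on $Q_T$, you use $\chi_{Q_T}\le\sum_{T'\in\widetilde T}\psi_{T'}$), recognize Fourier coefficients on the cube, apply Hausdorff--Young, absorb $g_T$ via \Defenuref{PAPERC:Def:PainlessNSGF}{PAPERC:Def:PainlessNSGF3}, and finish with the bounded operator $\{a_T\}\mapsto\{a_T^+\}$.

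For \eqref{PAPERC:eq:characterization2} you take a genuinely different route. The paper argues by \emph{duality}: using \Theenuref{PAPERC:The:decomcovthe}{PAPERC:The:decomcovthe4} and density of $\mathcal{S}$ in the predual, it writes $\|h\|_{D(\mathcal{Q},L^{p'},\ell^q_{\omega^s})}=\sup_{\sigma}|\langle h,\sigma\rangle|$ over Schwartz $\sigma$ of unit $D(\mathcal{Q},L^{p},\ell^{q'}_{\omega^{-s}})$-norm, expands $\sigma$ (not $h$) in the frame, applies H\"older twice, and controls the resulting $d(\mathcal{Q},\ell^{p'},\ell^{q'}_{\omega^{-s}})$-norm of $\{\langle\sigma,g_{m,T}\rangle\}$ by the already-established \eqref{PAPERC:eq:characterization1}; the passage from $\tilde g$ to $g$ costs only the lower frame bound in \eqref{PAPERC:eq:nsgtcharacterization}. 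No second Hausdorff--Young step is needed. Your direct approach --- expand $h$ via the reconstruction formula and apply the synthesis-direction Hausdorff--Young per cube --- is essentially what the paper does later in \Proref{PAPERC:Pro:CharacterizationFor2} to bound the reconstruction operator, but there only for $p=2$. It is more constructive, but the density step you sketch does not close: to pass the inequality from $h\in\mathcal{S}$ to general $h\in D(\mathcal{Q},L^{p'},\ell^q_{\omega^s})$ you would need continuity of the coefficient map $h\mapsto\{\langle h,g_{m,T}\rangle\}$ from $D(\mathcal{Q},L^{p'},\ell^q_{\omega^s})$ into $d(\mathcal{Q},\ell^{p},\ell^q_{\omega^s})$, and only the direction \eqref{PAPERC:eq:characterization1} (from $D^p$ to $d^{p'}$) is available. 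The paper's duality argument sidesteps this circularity entirely.

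One minor slip: the comparability $b_n\asymp b_{n'}$ for $T'\in\widetilde T$ is not a consequence of \Defenuref{PAPERC:Def:PainlessNSGF}{PAPERC:Def:PainlessNSGF2}, which concerns the weight $1+\|a_n\|_2$ and the separation of the centers, not the sampling steps.
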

\begin{proof}
We first prove \eqref{PAPERC:eq:characterization1}. Given $f\in D(\mathcal{Q},L^p,\ell^q_{\omega^s})$, since $\widetilde{\psi_T}:=\sum_{T'\in \widetilde{T}}\psi_T\equiv 1$ on $Q_T$, then
\begin{align*}
&\norm{\{\scalarp{f,g_{m,T}}\}_{m\in \bb{Z}^d}}_{\ell^{p'}}=\left(\sum_{m\in \bb{Z}^d}\abs{\scalarp{\widetilde{\psi_T}f,g_{m,T}}}^{p'}\right)^{1/p'}\\
&=b_T^{-d/2}\left(\sum_{m\in \bb{Z}^d}\abs{b_T^{d/2}\int_{\bb{R}^d}\widetilde{\psi_T}(x)f(x)g_{T}(x)e^{-2\pi i mb_T\cdot x}dx}^{p'}\right)^{1/p'},
\end{align*}
with $b_T>0$ being the frequency sampling step. Since $1\leq p\leq 2$ we can use the Hausdorff-Young inequality \cite[Theorem 2.1 on page 98]{Katznelson1976}, which together with \Defenuref{PAPERC:Def:PainlessNSGF}{PAPERC:Def:PainlessNSGF3} imply
\begin{equation*}
\norm{\{\scalarp{f,g_{m,T}}\}_{m\in \bb{Z}^d}}_{\ell^{p'}}\leq b_T^{-d/{2}}\norm{\widetilde{\psi_T}fg_T}_{L^p}\leq C_1\norm{\widetilde{\psi_T}f}_{L^p}.
\end{equation*}
Hence, using \eqref{PAPERC:eq:Operatornorm} we get
\begin{align*}
\norm{\left\{\scalarp{f,g_{m,T}}\right\}_{m\in \bb{Z}^d,T\in \mathcal{T}}}_{d(\mathcal{Q},\ell^{p'},\ell^q_{\omega^s})}&\leq C_1\norm{\left\{\norm{\widetilde{\psi_T}f}_{L^p}\right\}_{T\in \mathcal{T}}}_{\ell^q_{\omega^s}}\\
&\leq C_2\norm{f}_{D(\mathcal{Q},L^p,\ell^q_{\omega^s})}.
\end{align*}
Let us now prove \eqref{PAPERC:eq:characterization2}. Given $h\in D(\mathcal{Q},L^{p'},\ell^q_{\omega^s})$ we may write the norm as
\begin{equation}\label{PAPERC:eq:characterizationproof0}
\norm{h}_{D(\mathcal{Q},L^{p'},\ell^q_{\omega^s})}=\sup_{\sigma\in \mathcal{S}(\bb{R}^d),\norm{\sigma}_{D(\mathcal{Q},L^{p},\ell^{q'}_{\omega^{-s}})}=1}\abs{\scalarp{h,\sigma}},\qquad q':=q/(q-1),
\end{equation}
since the dual space of $D(\mathcal{Q},L^{p'},\ell^q_{\omega^s})$ can be identified with $D(\mathcal{Q},L^p,\ell^{q'}_{\omega^{-s}})$ and since $\mathcal{S}(\bb{R}^d)$ is dense in $D(\mathcal{Q},L^p,\ell^{q'}_{\omega^{-s}})$. Given $\sigma\in \mathcal{S}(\bb{R}^d)$, with $\|\sigma\|_{D(\mathcal{Q},L^{p},\ell^{q'}_{\omega^{-s}})}=1$, we write the frame expansion of $\sigma$ with respect to $\{g_{m,T}\}_{m,T}$ and apply H\"{o}lder's inequality twice to obtain
\begin{align}\label{PAPERC:eq:characterizationproof1}
\abs{\scalarp{h,\sigma}}&\leq \sum_{T\in \mathcal{T}}\sum_{m\in \bb{Z}^d}\abs{\scalarp{\sigma,g_{m,T}}\scalarp{h,\tilde{g}_{m,T}}}\notag\\
&\leq \sum_{T\in \mathcal{T}}\norm{\{\scalarp{\sigma,g_{m,T}}\}_{m\in \bb{Z}^d}}_{\ell^{p'}}\norm{\{\scalarp{h,\tilde{g}_{m,T}}\}_{m\in \bb{Z}^d}}_{\ell^{p}}\notag\\
&\leq \norm{\left\{\scalarp{\sigma,g_{m,T}}\right\}_{m\in \bb{Z}^d,T\in \mathcal{T}}}_{d(\mathcal{Q},\ell^{p'},\ell^{q'}_{\omega^{-s}})} \norm{\left\{\scalarp{h,\tilde{g}_{m,T}}\right\}_{m\in \bb{Z}^d,T\in \mathcal{T}}}_{d(\mathcal{Q},\ell^p,\ell^q_{\omega^{s}})}.
\end{align}
According to \eqref{PAPERC:eq:characterization1} then 
\begin{equation*}
\norm{\{\scalarp{\sigma,g_{m,T}}\}_{m,T}}_{d(\mathcal{Q},\ell^{p'},\ell^{q'}_{\omega^{-s}})}\leq C_1\norm{\sigma}_{D(\mathcal{Q},L^p,\ell^{q'}_{\omega^{-s}})}=C_1,
\end{equation*}
which combined with \eqref{PAPERC:eq:characterizationproof1} and \eqref{PAPERC:eq:nsgtcharacterization} yield
\begin{align}\label{PAPERC:eq:characterizationproof2}
\abs{\scalarp{h,\sigma}}&\leq C_1\norm{\left\{\scalarp{h,\tilde{g}_{m,T}}\right\}_{m\in \bb{Z}^d,T\in \mathcal{T}}}_{d(\mathcal{Q},\ell^p,\ell^q_{\omega^{s}})} \notag \\
&\leq C_2\norm{\left\{\scalarp{h,g_{m,T}}\right\}_{m\in \bb{Z}^d,T\in \mathcal{T}}}_{d(\mathcal{Q},\ell^p,\ell^q_{\omega^{s}})},
\end{align}
with $C_2:=C_1/A$. Finally, combining \eqref{PAPERC:eq:characterizationproof0} and \eqref{PAPERC:eq:characterizationproof2} we arrive at
\begin{equation*}
\norm{h}_{D(\mathcal{Q},L^{p'},\ell^q_{\omega^s})}\leq C_2\norm{\left\{\scalarp{h,g_{m,T}}\right\}_{m\in \bb{Z}^d,T\in \mathcal{T}}}_{d(\mathcal{Q},\ell^p,\ell^q_{\omega^{s}})},
\end{equation*}
which proves \eqref{PAPERC:eq:characterization2}.
\end{proof}
We note that for $s\in \bb{R}$, $1\leq q<\infty$ and $p=2$, \Theref{PAPERC:The:characterization} yields the equivalence	
\begin{equation*}
\norm{f}_{D(\mathcal{Q},L^2,\ell^q_{\omega^s})}\asymp \norm{\left\{\scalarp{f,g_{m,T}}\right\}_{m\in \bb{Z}^d,T\in \mathcal{T}}}_{d(\mathcal{Q},\ell^2,\ell^q_{\omega^s})},\quad f\in D(\mathcal{Q},L^2,\ell^q_{\omega^s}).
\end{equation*}
It follows that the \emph{coefficient operator} $C:f\rightarrow \{\langle f,g_{m,T}\rangle\}_{m,T}$ is bounded from $D(\mathcal{Q},L^2,\ell^q_{\omega^s})$ into $d(\mathcal{Q},\ell^2,\ell^q_{\omega^s})$. We define the corresponding \emph{reconstruction operator} as
\begin{equation*}
R\left(\{c_{m,T}\}_{m\in \bb{Z}^d,T\in \mathcal{T}}\right)=\sum_{T\in \mathcal{T}}\sum_{m\in \bb{Z}^d}c_{m,T}\tilde{g}_{m,T},\qquad \forall \{c_{m,T}\}_{m,T}\in d(\mathcal{Q},\ell^2,\ell^q_{\omega^s}).
\end{equation*}
With this notation we have the following result.
\begin{Pro}\label{PAPERC:Pro:CharacterizationFor2}
Let $\{g_{m,T}\}_{m\in \bb{Z}^d,T\in \mathcal{T}}$ be a painless NSGF with associated structured covering $\mathcal{Q}=\{Q_T\}_{T\in \mathcal{T}}$ and weight function $\{\omega_T\}_{T\in \mathcal{T}}=\{1+\|a_T\|_2\}_{T\in \mathcal{T}}$. Given $s\in \bb{R}$ and $1\leq q<\infty$, the reconstruction operator $R$ is bounded from $d(\mathcal{Q},\ell^2,\ell^q_{\omega^s})$ onto $D(\mathcal{Q},L^2,\ell^q_{\omega^s})$ and we have the expansions
\begin{equation}\label{PAPERC:eq:unconditionalconvergence}
f=RC(f)=\sum_{m\in \bb{Z}^d,T\in \mathcal{T}}\scalarp{f,g_{m,T}}\tilde{g}_{m,T},\quad f\in D(\mathcal{Q},L^2,\ell^q_{\omega^s}),
\end{equation}
with unconditional convergence.
\end{Pro}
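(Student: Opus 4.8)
The plan is to establish the three claims — boundedness of $R$, the reconstruction identity $f=RC(f)$, and unconditional convergence — in that order, reducing everything to the ``neighbour--sum'' bound \eqref{PAPERC:eq:Operatornorm} and to \Theref{PAPERC:The:characterization} with $p=p'=2$.

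\textbf{Step 1 (boundedness and unconditional convergence of $R$).} I would first treat a finitely supported sequence $\{c_{m,T}\}$ and set $h:=\sum_{m,T}c_{m,T}\tilde g_{m,T}\in L^2(\bb{R}^d)$. By \eqref{PAPERC:eq:nsgtdualexpression} we have $\tilde g_{m,T}(x)=\tilde g_T(x)e^{2\pi i m b_T\cdot x}$ with $\tilde g_T:=g_T/G$; since $A\le G\le B$ by \eqref{PAPERC:eq:nsgtcharacterization} and $g_T$ satisfies \Defenuref{PAPERC:Def:PainlessNSGF}{PAPERC:Def:PainlessNSGF3}, the dual window obeys $\supp(\tilde g_T)\subseteq\supp(g_T)\subseteq[0,1/b_T]^d+a_T\subset Q_T$ and $\norm{\tilde g_T}_{L^\infty}\le C' b_T^{d/2}$. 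Grouping the finite sum by $T$, $h=\sum_T h_T$ with $h_T(x):=\tilde g_T(x)\sum_m c_{m,T}e^{2\pi i m b_T\cdot x}$, and using that $\{e^{2\pi i m b_T\cdot x}\}_m$ is, up to the factor $b_T^{d/2}$, orthonormal on a cube of side $1/b_T$ containing $\supp(\tilde g_T)$, one gets $\norm{h_T}_{L^2}\le C'a_T$ with $a_T:=\norm{\{c_{m,T}\}_m}_{\ell^2}$. Since $\norm{\psi_{T'}}_{L^\infty}\le 1$ and $\psi_{T'}h_T\not\equiv 0$ forces $Q_{T'}\cap Q_T\neq\emptyset$, i.e.\ $T\in\widetilde{T'}$, this gives $\norm{\psi_{T'}h}_{L^2}\le C'\sum_{T\in\widetilde{T'}}a_T=C'a_{T'}^{+}$, and hence by \eqref{PAPERC:eq:Operatornorm}
\[
\norm{h}_{D(\mathcal{Q},L^2,\ell^q_{\omega^s})}\le C'\norm{\{a_{T'}^{+}\}_{T'}}_{\ell^q_{\omega^s}}\le C' C_{+}\norm{\{c_{m,T}\}_{m,T}}_{d(\mathcal{Q},\ell^2,\ell^q_{\omega^s})}.
\]
Because $q<\infty$, finitely supported sequences are dense in $d(\mathcal{Q},\ell^2,\ell^q_{\omega^s})$ and, for any fixed $\{c_{m,T}\}\in d(\mathcal{Q},\ell^2,\ell^q_{\omega^s})$, the $d(\mathcal{Q},\ell^2,\ell^q_{\omega^s})$-norm of the restriction of $\{c_{m,T}\}$ to the complement of a sufficiently large finite set can be made arbitrarily small; together with the displayed estimate and completeness of $D(\mathcal{Q},L^2,\ell^q_{\omega^s})$ (\Theenuref{PAPERC:The:decomcovthe}{PAPERC:The:decomcovthe2}) this shows that the net of finite partial sums of $\sum_{m,T}c_{m,T}\tilde g_{m,T}$ is Cauchy. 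Hence the series converges unconditionally in $D(\mathcal{Q},L^2,\ell^q_{\omega^s})$, $R$ is well defined on all of $d(\mathcal{Q},\ell^2,\ell^q_{\omega^s})$, and $\norm{R(\{c_{m,T}\})}_{D(\mathcal{Q},L^2,\ell^q_{\omega^s})}\le C'C_{+}\norm{\{c_{m,T}\}}_{d(\mathcal{Q},\ell^2,\ell^q_{\omega^s})}$ by continuity of the norm.

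\textbf{Step 2 ($RC=\mathrm{Id}$ and surjectivity).} By \Theref{PAPERC:The:characterization} with $p=p'=2$ the coefficient operator $C$ is bounded from $D(\mathcal{Q},L^2,\ell^q_{\omega^s})$ into $d(\mathcal{Q},\ell^2,\ell^q_{\omega^s})$, so $RC$ is a bounded operator on $D(\mathcal{Q},L^2,\ell^q_{\omega^s})$. I would first verify $RC(\sigma)=\sigma$ for $\sigma\in\mathcal{S}(\bb{R}^d)$: since $\sigma\in L^2(\bb{R}^d)$ the classical frame expansion $\sigma=\sum_{m,T}\scalarp{\sigma,g_{m,T}}\tilde g_{m,T}$ holds with unconditional $L^2$-convergence, while by Step 1 the same net of finite partial sums converges to $RC(\sigma)$ in $D(\mathcal{Q},L^2,\ell^q_{\omega^s})$; since both $L^2(\bb{R}^d)$ and $D(\mathcal{Q},L^2,\ell^q_{\omega^s})$ embed continuously into the Hausdorff space $\mathcal{S}'(\bb{R}^d)$ by \Theenuref{PAPERC:The:decomcovthe}{PAPERC:The:decomcovthe1}, the two limits agree, so $RC(\sigma)=\sigma$. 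As $q<\infty$, $\mathcal{S}(\bb{R}^d)$ is dense in $D(\mathcal{Q},L^2,\ell^q_{\omega^s})$ by \Theenuref{PAPERC:The:decomcovthe}{PAPERC:The:decomcovthe3}, and both $RC$ and the identity are bounded on this space, whence $RC=\mathrm{Id}$. In particular $R$ is onto, and applying the unconditional convergence from Step 1 to $\{c_{m,T}\}=\{\scalarp{f,g_{m,T}}\}_{m,T}$ yields \eqref{PAPERC:eq:unconditionalconvergence}.

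\textbf{Main obstacle.} The delicate point is Step 2: a general $f\in D(\mathcal{Q},L^2,\ell^q_{\omega^s})$ need not lie in $L^2(\bb{R}^d)$ (already when $s=0$ and $q>2$, where $L^2(\bb{R}^d)=D(\mathcal{Q},L^2,\ell^2)\subsetneq D(\mathcal{Q},L^2,\ell^q)$), so the $L^2$-frame reconstruction cannot be invoked for $f$ directly and one is forced through the density of $\mathcal{S}(\bb{R}^d)$ and the identification, inside $\mathcal{S}'(\bb{R}^d)$, of the decomposition-space limit with the $L^2$-limit of the same partial sums. The remaining ingredients — the orthogonality estimate for $\norm{h_T}_{L^2}$ and the $L^\infty$-bound on $\tilde g_T$ — are routine and mirror the computations in the proof of \Theref{PAPERC:The:characterization}.
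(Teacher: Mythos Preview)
Your proposal is correct and follows essentially the same route as the paper: both arguments estimate $\norm{\psi_{T'}R(\{c_{m,T}\})}_{L^2}$ by reducing to the neighbouring $T$'s via the support of $\psi_{T'}$, bound each block $\norm{\sum_m c_{m,T}\tilde g_{m,T}}_{L^2}$ by $\norm{\{c_{m,T}\}_m}_{\ell^2}$ (you use Parseval on the cube, the paper cites Hausdorff--Young for $p=2$, which is the same thing), apply \eqref{PAPERC:eq:Operatornorm}, and then pass from $RC(\sigma)=\sigma$ on $\mathcal{S}(\bb{R}^d)$ to all of $D(\mathcal{Q},L^2,\ell^q_{\omega^s})$ by density and continuity. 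Your treatment is in fact a bit more explicit than the paper's on two points the paper glosses over: you first establish well-definedness and unconditional convergence of $R$ on general sequences via finitely supported approximants, and you spell out why $RC(\sigma)=\sigma$ for $\sigma\in\mathcal{S}(\bb{R}^d)$ by identifying the $L^2$- and $D$-limits inside $\mathcal{S}'(\bb{R}^d)$.
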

\begin{proof}
We first prove that $R$ is bounded. Given $\{c_{m,T}\}_{m,T}\in d(\mathcal{Q},\ell^2,\ell^q_{\omega^s})$, \eqref{PAPERC:eq:Operatornorm} and \eqref{PAPERC:eq:nsgtcharacterization} yield
\begin{align}\label{PAPERC:eq:boundedrecon1}
\norm{R(\{c_{m,T}\}_{m,T})}_{D(\mathcal{Q},L^2,\ell^q_{\omega^s})}&=\norm{\left\{\norm{\psi_T \left(\sum_{T'\in \widetilde{T}}\sum_{m\in \bb{Z}^d}c_{m,T'}\tilde{g}_{m,T'}\right)}_{L^2}\right\}_{T\in \mathcal{T}}}_{\ell^q_{\omega^s}}\notag \\
&\leq C_1\norm{\left\{\norm{\sum_{m\in \bb{Z}^d}c_{m,T}g_{m,T}}_{L^2}\right\}_{T\in \mathcal{T}}}_{\ell^q_{\omega^s}}.
\end{align}
Applying \Defenuref{PAPERC:Def:PainlessNSGF}{PAPERC:Def:PainlessNSGF3} and the Hausdorff-Young inequality \cite[Theorem 2.2 on page 99]{Katznelson1976} we get
\begin{align}\label{PAPERC:eq:boundedrecon2}
\norm{\sum_{m\in \bb{Z}^d}c_{m,T}g_{m,T}}_{L^2}^2\leq C\int_{\bb{R}^d}\abs{b_T^{d/2}\sum_{m\in \bb{Z}^d}c_{m,T}e^{2\pi i mb_T\cdot x}}^2dx\leq C\norm{\left\{c_{m,T}\right\}_{m\in \bb{Z}^d}}_{\ell^2}^2.
\end{align}
Combining \eqref{PAPERC:eq:boundedrecon1} and \eqref{PAPERC:eq:boundedrecon2} we arrive at
\begin{align}\label{PAPERC:eq:boundedrecoperator}
\norm{R(\{c_{m,T}\}_{m,T})}_{D(\mathcal{Q},L^2,\ell^q_{\omega^s})}&\leq C_2\norm{\left\{\norm{\left\{c_{m,T}\right\}_{m\in \bb{Z}^d}}_{\ell^2}\right\}_{T\in \mathcal{T}}}_{\ell^q_{\omega^s}}\notag\\
&=C_2\norm{\{c_{m,T}\}_{m,T}}_{d(\mathcal{Q},\ell^2,\ell^q_{\omega^s})},
\end{align}
which shows the boundedness of $R$. Let us now prove the unconditional convergence of \eqref{PAPERC:eq:unconditionalconvergence}. Given $f\in D(\mathcal{Q},L^2,\ell^q_{\omega^s})$ we can find a sequence $\{f_k\}_{k\in \bb{N}}\subset \mathcal{S}(\bb{R}^d)$ such that $f_k\rightarrow f$ in $D(\mathcal{Q},L^2,\ell^q_{\omega^s})$. For each $k$ we have the expansion $f_k=RC(f_k)$ and by continuity of $RC$ we get $f=RC(f)$. Given $\varepsilon>0$, \eqref{PAPERC:eq:boundedrecoperator} implies that we can find a \emph{finite} subset $F_0\subseteq \bb{Z}^d\times \mathcal{T}$, such that for all finite sets $F\supseteq F_0$,
\begin{equation*}
\norm{f-\sum_{(m,T)\in F}\scalarp{f,g_{m,T}}\tilde{g}_{m,T}}_{D(\mathcal{Q},L^2,\ell^q_{\omega^s})}\leq C_2 \norm{\{\scalarp{f,g_{m,T}}\}_{(m,T)\notin F}}_{d(\mathcal{Q},\ell^2,\ell^q_{\omega^s})}<\varepsilon.
\end{equation*}
According to \cite[Proposition 5.3.1 on page 98]{Grochenig2001}, this property is equivalent to unconditional convergence.
\end{proof}
Based on \Proref{PAPERC:Pro:CharacterizationFor2}, we can show some important properties of $\{g_{m,T}\}_{m,T}$ in connection with nonlinear approximation theory \cite{Devore1993,DeVore98}. Assume $f\in D(\mathcal{Q},L^2,\ell^2_{\omega^s})$, for $s\in \bb{R}$, and write the frame expansion
\begin{equation}\label{PAPERC:eq:ApplicationNonlinear1}
f=\sum_{m\in \bb{Z}^d,T\in \mathcal{T}}\scalarp{f,g_{m,T}}\tilde{g}_{m,T}.
\end{equation}
Let $\{\theta_k\}_{k\in \bb{N}}$ be a rearrangement of the frame coefficients $\{\langle f,g_{m,T}\rangle\}_{m,T}$ such that $\{|\theta_k|\}_{k\in \bb{N}}$ constitutes a non-increasing sequence. Also, let $f_N$ be the $N$-term approximation to $f$ obtained by extracting the terms in \eqref{PAPERC:eq:ApplicationNonlinear1} corresponding to the $N$ largest coefficients $\{\theta_k\}_{k=1}^N$. Since $R$ is bounded, \cite[Theorem 6]{Gribonval2004} implies that for each $1\leq \tau<2$,
\begin{align}\label{PAPERC:eq:ApplicationNonlinear3}
\norm{f-f_N}_{D(\mathcal{Q},L^2,\ell^2_{\omega^s})}&\leq C_1\norm{\left\{\theta_k\right\}_{k>N}}_{d(\mathcal{Q},\ell^2,\ell^2_{\omega^s})}\leq C_2N^{-\alpha}\norm{\left\{\theta_k\right\}_{k\in \bb{N}}}_{d(\mathcal{Q},\ell^\tau,\ell^\tau_{\omega^s})}\notag\\
&= C_2N^{-\alpha}\norm{\left\{\scalarp{f,g_{m,T}}\right\}_{k\in \bb{N}}}_{d(\mathcal{Q},\ell^\tau,\ell^\tau_{\omega^s})},\quad \alpha:=1/\tau-1/2.
\end{align}
We conclude that for $f\in D(\mathcal{Q},L^2,\ell^2_{\omega^s})$, with frame coefficients in $d(\mathcal{Q},\ell^\tau,\ell^\tau_{\omega^s})$, we obtain good approximations in $D(\mathcal{Q},L^2,\ell^2_{\omega^s})$ by thresholding the frame coefficients in \eqref{PAPERC:eq:ApplicationNonlinear1}. The rate of the approximation is given by $\alpha\in (0,1/2]$.

\section{Numerical experiments}\label{PAPERC:Sec:5}
In this section we provide the numerical experiments, thresholding coefficients of both stationary and nonstationary Gabor expansions. We note that analyzis with a stationary Gabor frame corresponds to analyzis with the short-time Fourier transform (STFT) as the Gabor coefficients can be re-written as
\begin{equation*}
\scalarp{f,g_{m,n}}=\int_{\bb{R}^d}f(t)\overline{g(t-na)}e^{-2\pi imb\cdot t}dt=V_gf(na,mb),\quad f\in L^2(\bb{R}^d),
\end{equation*}
with $V_gf(na,mb)$ denoting the STFT of $f$, with respect to $g$, at time $na$ and frequency $mb$. 

For the implementation we use MATLAB 2017B and in particular we use the following two toolboxes: The LTFAT \cite{ltfatnote030} (version 2.2.0 or above) available from \url{http://ltfat.github.io/} and the NSGToolbox \cite{Balazs11} (version 0.1.0 or above) available from \url{http://nsg.sourceforge.net/}. The sound files we consider are part of the EBU-SQAM database \cite{EBUSQAM}, which consists of 70 test sounds sampled at 44.1 kHz. The test sounds form a large variety of speech and music including single instruments, classical orchestra, and pop music. Since music signals are continuous signals of finite energy, it make sense to consider them in the framework of decomposition spaces. Moreover, the decomposition space norm constitutes a natural measure for such nonstationary signals, capable of detecting local signal changes as opposed to the standard $L^p-$norm.

We divide the numerical analysis into two sections. In \Secref{PAPERC:Sec:SingleExperiment} we compare the performance of an adaptive nonstationary Gabor expansion to that of a classical Gabor expansion by analyzing spectrograms, reconstruction errors, and approximation rates associated to a particular music signal (signal 39 of the EBU-SQAM database). Then, in \Secref{PAPERC:Sec:Largescaleexperiment} we extend the experiment to cover the entire EBU-SQAM database and compare the average reconstruction errors and approximation rates, taken over the 70 test signals, for the two methods. To analyse the performance of an expansion we use the relative root mean square (RMS) reconstruction error
\begin{equation*}
\text{RMS}(f,f_{rec}):=\frac{\norm{f-f_{rec}}_2}{\norm{f}_2}.
\end{equation*}
As a general rule of thumb, an RMS error below $1\%$ is hardly noticeable to the average listener. We measure the redundancy of a transform by
\begin{equation*}
\frac{\text{number of coefficients}}{\text{length of signal}}.
\end{equation*}
The redundancy of the adaptive NSGF is approximately $5/3$ and we have chosen parameters for the stationary Gabor frame, which mathes this redundancy.

\subsection{Single experiment}\label{PAPERC:Sec:SingleExperiment}
In this experiment we consider sample 22000-284143 of signal 39 in the EBU-SQAM database. This signal is a piece of piano music consisting of an increasing melody of 10 individual tones (taken from an F major chord) starting at F2 (87 Hz fundamental frequency) and ending at F5 (698 Hz fundamental frequency). We construct the Gabor expansion using 1536 frequency channels and a hop size of 1024. The window function is chosen as a Hanning window of length 1536 such that the resulting system constitutes a painless Gabor frame. The Gabor transform has a redundancy of $\approx 1.51$ and the total number of Gabor coefficients is $198402$ (of which $195326$ are non-zero). We only work with the coefficients of the positive frequencies since the signal is real valued. Performing hard thresholding, and keeping only the $15800$ largest coefficients, we obtain a reconstructed signal with an RMS reconstruction error just below $1\%$. 

For the adaptive NSGF, we choose to follow the adaptation procedure from \cite{Balazs11}, resulting in the construction of so-called \emph{scale frames}. The idea is to calculate the onsets of the music piece, using a separate algorithm \cite{Dixon06}, and then to use short window functions around the onsets and long window functions between the onsets. The space between two onsets is spanned in such a way that the window length first increases (as we move away from the first onset) and then decreases (as we approach the second onset). To obtain a smooth resolution, the construction is such that adjacent windows are either of the same length or one is twice as long as the other. We refer the reader to \cite{Balazs11} for further details. For the actual implementation, we use 8 different Hanning windows with lengths varying from $192$ (around the onsets) to $192\cdot2^7=24576$. For the particular signal, the nonstationary Gabor transform has a redundancy of $\approx 1.66$, which is comparable to that of the Gabor transform. The total number of coefficients is $217993$ (of which $216067$ are non-zero). Again, we only consider the coefficients of the positive frequencies. Keeping the $13100$ largest coefficients we obtain an expansion with an RMS reconstruction error just below $1\%$. This is considerably fewer coefficients than needed for the stationary Gabor expansion, which shows a natural sparseness of scale frames for this particular signal class. This property was already noted by the authors in \cite{Balazs11}. Spectrograms based on the original expansions and the thresholded expansions can be found in \Figref{PAPERC:fig:FourSpectrograms}.
\begin{figure}[ht]
\center
\includegraphics[width=\textwidth]{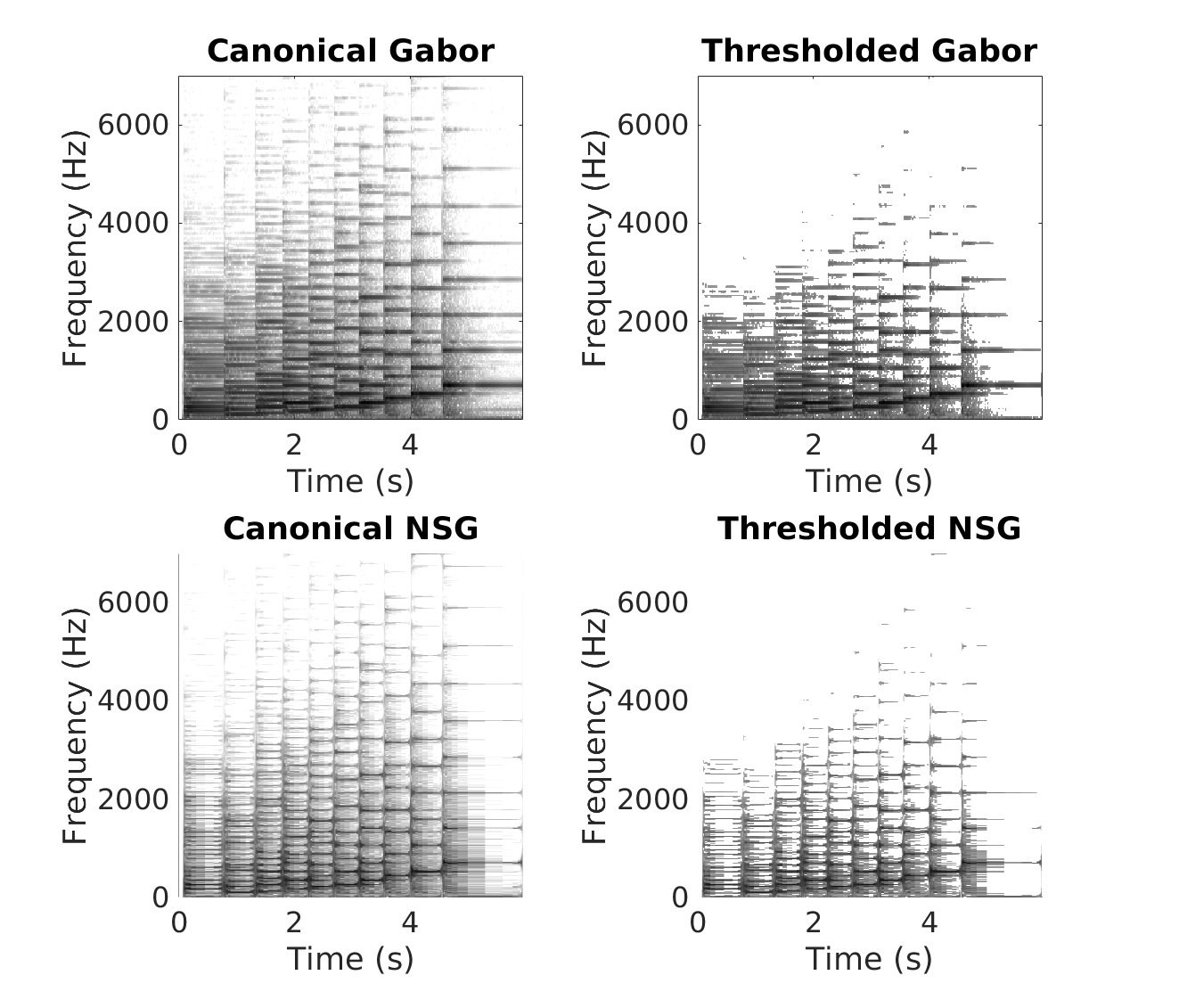}
\caption{Spectrograms based on the original and thresholded Gabor- and nonstationary Gabor (NSG) expansions with RMS errors just below $1\%$.}
\label{PAPERC:fig:FourSpectrograms}      
\end{figure}

The 10 "vertical stripes" in the spectrograms correspond to the onsets of the 10 tones in the melody and the "horizontal stripes" correspond to the frequencies of the harmonics. We note that the adaptive behaviour of the NSGF is clearly visible in the spectrograms, resulting in a good time resolution around the onsets and a good frequency resolution between the onsets. In contrast to this behaviour, the stationary Gabor frame uses a uniform resolution over the whole time-frequency plane. 

Based on the results from \Secref{PAPERC:Sec:4} (in particular \eqref{PAPERC:eq:ApplicationNonlinear3}), we expect the RMS error $E(N)$ to decrease as $N^{-\alpha}$, for some $\alpha>0$, with $N$ being the number of non-zero coefficients. Calculating $E(N)$ for different values of $N$ and performing power regression, we obtain the plots shown in \Figref{PAPERC:Fig:ApproxRate}.
\begin{figure}[ht]
\center
\includegraphics[width=\textwidth]{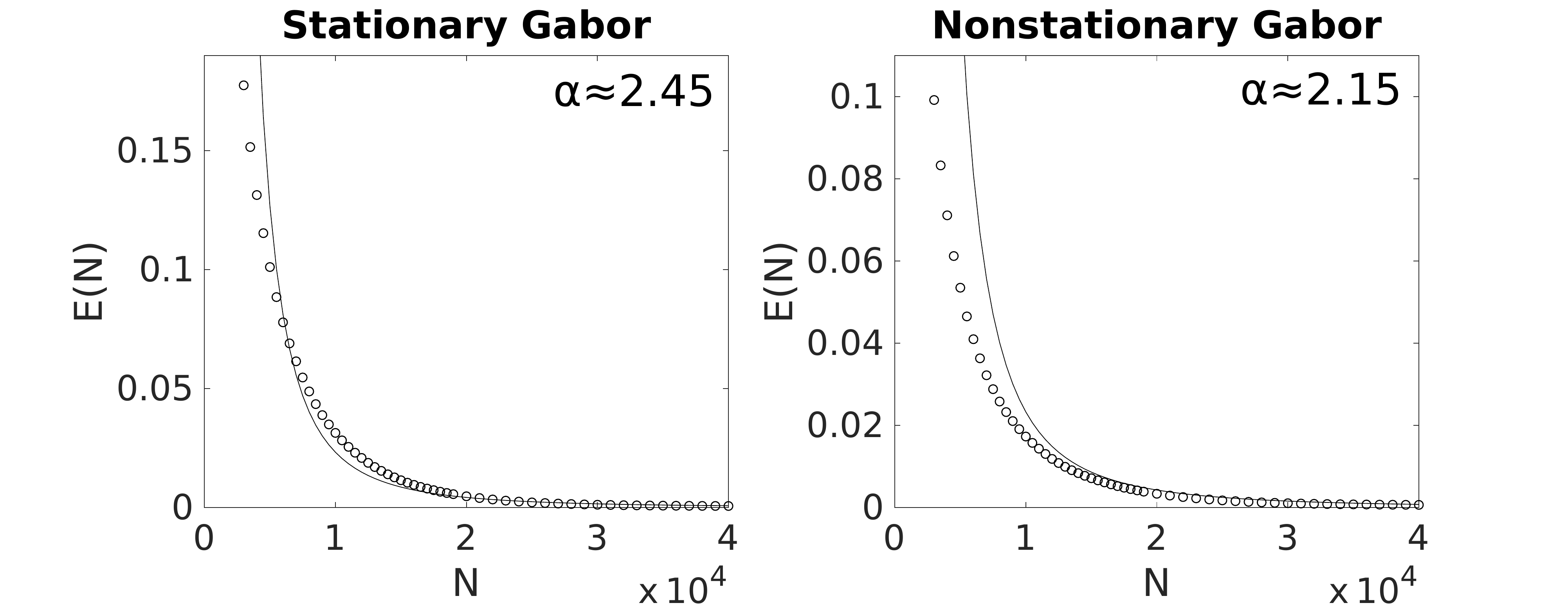}
\caption{RMS error $E(N)$ as a function of $N$, the number of non-zero coefficients, for both stationary and nonstationary Gabor expansions. Also, an estimated power function is plotted for each expansion together with the associated value of the exponent $\alpha$.}
\label{PAPERC:Fig:ApproxRate}      
\end{figure}

The results in \Figref{PAPERC:Fig:ApproxRate} show that both the RMS error $E(N)$ and the approximation rate $\alpha$ are lower for the nonstationary Gabor expansion than for the stationary Gabor expansion. Clearly, a small RMS error is more important than a fast approximation rate. Also, the fast approximation rate for the stationary Gabor frame is caused mainly by the high RMS error associated with small values of $N$. We note that both approximation rates are considerably faster than the rate given in \eqref{PAPERC:eq:ApplicationNonlinear3} (which belongs to $(0,1/2]$). This illustrates that \eqref{PAPERC:eq:ApplicationNonlinear3} only provides us with an \emph{upper bound} on the approximation error --- the actual error might be much smaller. It also illustrates that both methods work extremely well for this kind of sparse signal. In the next section we extend the analyzis presented here to cover the entire EBU-SQAM database.

\subsection{Large scale experiment}\label{PAPERC:Sec:Largescaleexperiment}
For this experiment we consider the first $524288$ samples of each of the $70$ test sounds avaliable in the EBU-SQAM database. For each test sound we construct a nonstationary Gabor expansion, with parameters as described in \Secref{PAPERC:Sec:SingleExperiment}, and three stationary Gabor expansions with different parameter settings. Using the notation (hopsize,number of frequency channels), we use the parameter settings $(1024,2048)$, $(1536,2048)$, and $(1024,1536)$ for the three Gabor expansions. The window function associated to a Gabor expansion is chosen as a Hanning window with length equal to the corresponding number of frequency channels (resulting in a painless Gabor frame). For each of the four expansions we calculate for each test sound:\newpage
\begin{enumerate}
\item The redundancy of the (non-thresholded) expansion.

\item Thresholded expansions with respect to $N$, the number of non-zero coefficient, where $N$ takes on the values
\begin{equation*}
N\in \left\{10000, 11000, \cdots, 29000, 30000, 35000, \cdots , 195000, 200000\right\}.
\end{equation*}

\item The sum of RMS errors $\sum_N E(N)$ taken over all $55$ possible values of $N$.

\item The value $\alpha$ of the estimated power function.
\end{enumerate}
Repeating the experiment for all 70 test sounds we get the averaged values shown in Table \ref{PAPERC:tab:AverageResults}.

\begin{table}[h!]
\centering
\caption{Average redundancies, sum of RMS errors, and approximation rates taken over the $70$ test signals in the EBU-SQAM database. The experiment includes three stationary Gabor frames, with different parameters settings, and one NSGF.}
\label{PAPERC:tab:AverageResults}
\begin{tabular}{l c c c c}
\hline
Transform: & G$(1024,2048)$ & G$(1536,2048)$ & G$(1024,1536)$ & NSGF\\
\hline
Average redun.: & $2.0020$ & $1.3451$ & $1.5049$ & $1.6206$\\
Average error: & $2.1448$ & $1.9128$ & $1.9492$ & $1.7367$\\
Average $\alpha$: & $1.3088$ & $1.4455$ & $1.4278$ & $1.2606$\\
\hline
\end{tabular}
\end{table}

The results in Table \ref{PAPERC:tab:AverageResults} show the same behaviour as the experiment in \Secref{PAPERC:Sec:SingleExperiment} --- The NSGF provides the smallest RMS error and the slowest approximation rate. We note that the approximation rates all belong to the interval $[1.25;1.45]$, which is much lower than the rates obtained in \Secref{PAPERC:Sec:SingleExperiment}. This is due to the fact that the piano signal in \Secref{PAPERC:Sec:SingleExperiment} has a very sparse expansion, which is not true for all $70$ test signals in the database. At first glance, the Gabor frame which seems to provide the best results is the one with parameter settings $(1536,2048)$ --- it produces the smallest RMS error and the largest approximation rate. However, this is mainly due to the low redundancy of the frame, which is only around $1.35$. A low redundancy implies fewer Gabor coefficients (with more time-frequency information contained in each coefficient), which implies good results in terms of RMS error and approximation rate. However, a low redundancy also implies a worsened time-frequency resolution, which is not desirable for practical purposes. Finally, it is worth noting that the NSGF produces a significantly lower RMS error than the Gabor frame with parameters $(1536,2048)$ even with a higher redundancy.

\section{Conclusion}\label{PAPERC:Sec:6}
We have provided a self-contained description of decomposition spaces on the time side and proven several important properties of such spaces. Given a painless NSGF with flexible time resolution, we have shown how to construct an associated decomposition space, which characterizes signals with sparse expansions relative to the NSGF. Based on this characterization we have proven an upper bound on the approximation error occurring when thresholding the coefficients of the frame expansions. The theoretical results have been complemented with numerical experiments, illustrating that the approximation error is indeed smaller than the theoretical upper bound. Using terminology from nonlinear approximation theory, we have proven a Jackson inequality for nonlinear approximation with certain NSGFs. It could be interesting to consider the inverse estimate, a so-called Bernstein inequality, providing us with a lower bound on the approximation error. The numerical experiments indeed suggest that the approximation error acts as a power function of the number of non-zero coefficients. Unfortunately, obtaining a Bernstein inequality for such a redundant dictionary is in general beyond the reach of current methods \cite{Gribonval2006}.

\appendix
\section{Proof of \Theref{PAPERC:The:decomcovthe}}\label{PAPERC:App:1}
\begin{proof}
We will use the well known fact that
\begin{equation}\label{PAPERC:eq:wellknown1}
\int_{\bb{R}^d}(1+\norm{x}_2)^{-m}dx<\infty,\quad m>d.
\end{equation}
We prove each of the four statements separately and we write $D^s_{p,q}:=D(\mathcal{Q},L^p,\ell^q_{\omega^s})$ to simplify notation.
\begin{enumerate}
\item Repeating the arguments from \cite[Proposition 5.7]{Borup08}, using \Defref{PAPERC:Def:SAC}\eqref{PAPERC:Def:SAC2}, we can show that
\begin{equation}\label{PAPERC:eq:decomcovtheproof1}
D^{s+\varepsilon}_{p,\infty}\hookrightarrow D^s_{p,q}\hookrightarrow D^s_{p,\infty},\quad \varepsilon>d/q,
\end{equation}
for any $s\in \bb{R}$ and $1\leq p,q \leq \infty$. Hence, to prove \Theenuref{PAPERC:The:decomcovthe}{PAPERC:The:decomcovthe1} it suffices to show that $\mathcal{S}(\bb{R}^d)\hookrightarrow D^s_{p,\infty} \hookrightarrow \mathcal{S}'(\bb{R}^d)$ for any $s\in \bb{R}$ and $1\leq p\leq\infty$. We first show that $\mathcal{S}(\bb{R}^d)\hookrightarrow D^s_{p,\infty}$. Since $\{\omega_T\}_{T\in \mathcal{T}}=\{1+\|x_T\|_2\}_{T\in \mathcal{T}}$ is $\mathcal{Q}-$moderate, and $\psi_T$ is uniformly bounded, this result follows from \eqref{PAPERC:eq:wellknown1} since
\begin{align*}
\omega_T^s\norm{\psi_T f}_{L^p}&\leq C_1 \norm{(1+\norm{\cdot}_2)^s\psi_T f}_{L^p}\leq C_1 \norm{(1+\norm{\cdot}_2)^s f}_{L^p}\\
&\leq C_2 \norm{(1+\norm{\cdot}_2)^{s+r} f}_{L^\infty}\\
&\leq C_2\max_{|\beta|\leq N}\sup_{x\in \bb{R}^d}\abs{(1+\norm{x}_2)^N\partial^\beta_x f(x)},\quad f\in \mathcal{S}(\bb{R}^d),
\end{align*}
for $r>d/p$ and $N\geq s+r$. To show that $D^s_{p,\infty} \hookrightarrow \mathcal{S}'(\bb{R}^d)$, we define $\widetilde{\psi_T}:=\sum_{T'\in \widetilde{T}}\psi_{T'}$. Given $f\in D^s_{p,\infty}$ and $\varphi\in \mathcal{S}(\bb{R}^d)$, H\"{o}lder's inequality yields
\begin{align}\label{PAPERC:eq:embeddinglargep}
\abs{\scalarp{f,\varphi}}&=\abs{\sum_{T\in \mathcal{T}}\scalarp{\psi_T f,\widetilde{\psi_T}\varphi}}\leq \sum_{T\in \mathcal{T}} \norm{\psi_T f \widetilde{\psi_T}\varphi}_{L^1}\notag\\
&\leq \sum_{T\in \mathcal{T}} \norm{\psi_T f}_{L^p} \norm{\widetilde{\psi_T}\varphi}_{L^{p'}}	\leq \norm{f}_{D^s_{p,\infty}}\sum_{T\in \mathcal{T}}\omega_T^{-s}\norm{\widetilde{\psi_T}\varphi}_{L^{p'}},
\end{align}
with $1/p+1/p'=1$. Applying \eqref{PAPERC:eq:Operatornorm} we get
\begin{align}\label{PAPERC:eq:embeddinglargep2}
\sum_{T\in \mathcal{T}}\omega_T^{-s}\norm{\widetilde{\psi_T}\varphi}_{L^{p'}}&\leq \norm{\left\{\sum_{T'\in \widetilde{T}}\norm{\psi_{T'} \varphi}_{L^{p'}}\right\}_{T\in \mathcal{T}}}_{\ell^1_{\omega^{-s}}}\notag\\
&= \norm{\left\{\left(\norm{\psi_{T} \varphi}_{L^{p'}}\right)^{+}\right\}_{T\in \mathcal{T}}}_{\ell^1_{\omega^{-s}}}\notag\\
&\leq C_+\norm{\left\{\norm{\psi_{T}\varphi}_{L^{p'}}\right\}_{T\in \mathcal{T}}}_{\ell^1_{\omega^{-s}}}=C_+\norm{\varphi}_{D^{-s}_{p',1}}.
\end{align}
Now, \eqref{PAPERC:eq:decomcovtheproof1} implies $\norm{\varphi}_{D^{-s}_{p',1}}\leq C\norm{\varphi}_{D^{\varepsilon-s}_{p',\infty}}$ for $\varepsilon>d$. Hence, since we have already shown that $\mathcal{S}(\bb{R}^d)\hookrightarrow D^s_{p,\infty}$, we conclude from \eqref{PAPERC:eq:embeddinglargep} and \eqref{PAPERC:eq:embeddinglargep2} that $D^s_{p,\infty} \hookrightarrow \mathcal{S}'(\bb{R}^d)$. This proves \Theenuref{PAPERC:The:decomcovthe}{PAPERC:The:decomcovthe1}.

\item \Theenuref{PAPERC:The:decomcovthe}{PAPERC:The:decomcovthe2} follows from \Theenuref{PAPERC:The:decomcovthe}{PAPERC:The:decomcovthe1} and the arguments in \cite[Page 150]{Borup08}.

\item To prove \Theenuref{PAPERC:The:decomcovthe}{PAPERC:The:decomcovthe3} we let $f\in D^s_{p,q}$ and choose a function $I\in C^\infty_c(\bb{R}^d)$ satisfying $0\leq I(x)\leq 1$ and $I(x)\equiv 1$ on some neighbourhood of $x=0$. Since supp($I$) is compact we can choose a \emph{finite} subset $T^*\subset \mathcal{T}$ such that supp$(I)\subset \cup_{T\in T^*}Q_T$ and $\sum_{T\in T^*}\psi_T(x)\equiv 1$ on supp($I$). Hence, with $\widetilde{f}:=I f$ we get
\begin{equation}\label{PAPERC:eq:densityproof1}
\norm{\widetilde{f}}_{L^p}=\norm{\sum_{T\in T^*}\psi_T I f}_{L^p}\leq \sum_{T\in T^*}\norm{\psi_T  f}_{L^p}<\infty,
\end{equation}
since $f\in D^s_{p,q}$. Let $\varphi\in C^\infty_c(\bb{R}^d)$ with $0\leq \varphi(x)\leq 1$ and $\int_{\bb{R}^d}\varphi(x) dx=1$. Also, for $\varepsilon>0$ define $\varphi_\varepsilon(x):=\varepsilon^{-d}\varphi(x/\varepsilon)$ and let $\widetilde{f}_\varepsilon:=\varphi_\varepsilon\ast \widetilde{f}\in \mathcal{S}(\bb{R}^d)$. It follows from \eqref{PAPERC:eq:densityproof1} and a standard result on $L^p$-spaces \cite[Theorem 2.16 on page 64]{Elliot2001} that
\begin{equation*}
\norm{\widetilde{f}-\widetilde{f}_\varepsilon}_{D^s_{p,q}}\leq \norm{\left\{\norm{\widetilde{f}-\widetilde{f}_\varepsilon}_{L^p}\right\}_{T\in \mathcal{T}}}_{\ell^q_{\omega^s}}\rightarrow 0
\end{equation*}
as $\varepsilon\rightarrow 0$. Hence, the proof is done, if we can show that $\|f-\widetilde{f}\|_{D^s_{p,q}}$ can be made arbitrary small by choosing $\tilde{f}$ appropriately. To show this, we define $T_{\circ}:=\{T\in \mathcal{T}~\big|~I(x)\equiv 1 \text{ on }\text{supp}(\psi_T)\}$. Denoting its complement by $T_{\circ}^c$ we get
\begin{equation*}
\norm{f-\widetilde{f}}_{D^s_{p,q}}\leq 2\norm{\left\{\norm{\psi_T f}_{L^p}\right\}_{T\in T_{\circ}^c}}_{\ell^q_{\omega^s}}.
\end{equation*}
Finally, since $f\in D^s_{p,q}$, we can choose supp$(I)$ large enough, such that $\|f-\widetilde{f}\|_{D^s_{p,q}}<\varepsilon$ for any given $\varepsilon>0$. This proves \Theenuref{PAPERC:The:decomcovthe}{PAPERC:The:decomcovthe3}.

\item To prove \Theenuref{PAPERC:The:decomcovthe}{PAPERC:The:decomcovthe4} we first note that $(D^s_{p,q})'\subset \mathcal{S}'(\bb{R}^d)$ since $\mathcal{S}(\bb{R}^d)\subset D^s_{p,q}$. Furthermore, by \Remref{PAPERC:Rem:equivBAPU} we may assume the same BAPU $\{\psi_T\}_{T\in \mathcal{T}}$ is used for both $D^s_{p,q}$ and $D^{-s}_{p',q'}$. Let us first show that $D^{-s}_{p',q'}\subseteq (D^s_{p,q})'$. Given $\sigma\in D^{-s}_{p',q'}$ and $f\in D^s_{p,q}$, applying \eqref{PAPERC:eq:Operatornorm} and H\"{o}lder's inequality twice yield
\begin{align*}
\abs{\scalarp{f,\sigma}}&=\abs{\sum_{T\in \mathcal{T}}\scalarp{\widetilde{\psi_T} f,\psi_T\sigma}}
\leq \sum_{T\in \mathcal{T}}\norm{\widetilde{\psi_T} f}_{L^p}\norm{\psi_T \sigma}_{L^{p'}} \\
&\leq \sum_{T\in \mathcal{T}}\left(\omega_T^s\sum_{T'\in \widetilde{T}}\norm{\psi_{T'} f}_{L^p}\right)\left(\omega_T^{-s}\norm{\psi_T \sigma}_{L^{p'}}\right)\\
&\leq \norm{\left\{\norm{\left(\psi_T  f\right)^+}_{L^{p}}\right\}_{T\in \mathcal{T}}}_{\ell^{q}_{\omega^{s}}}\norm{\left\{\norm{\psi_T \sigma}_{L^{p'}}\right\}_{T\in \mathcal{T}}}_{\ell^{q'}_{\omega^{-s}}}\\
&\leq C_+\norm{f}_{D^s_{p,q}}\norm{\sigma}_{D^{-s}_{p',q'}}.
\end{align*}
To prove that $(D^s_{p,q})'\subseteq D^{-s}_{p',q'}$ we define the space $\ell^q\left(L^p\right)$ as those $\{f_T\}_{T\in \mathcal{T}}\subset \mathcal{S}'(\bb{R}^d)$ satisfying
\begin{equation*}
\norm{\{f_T\}_{T\in \mathcal{T}}}_{\ell^q\left(L^p\right)}:=\norm{\left\{\norm{f_T}_{L^p}\right\}_{T\in \mathcal{T}}}_{\ell^q}<\infty.
\end{equation*}
With this notation we get
\begin{equation*}
\norm{f}_{D^s_{p,q}}=\norm{\left\{\omega_T^s\norm{\psi_T f}_{L^p}\right\}_{T\in \mathcal{T}}}_{\ell^q}=\norm{\{\omega_T^s\psi_T f\}_{T\in \mathcal{T}}}_{\ell^q\left(L^p\right)},
\end{equation*}
for all $f\in D^s_{p,q}$. Since $f\rightarrow \{\omega_T^s\psi_Tf\}_{T\in \mathcal{T}}$ defines an injective mapping from $D^s_{p,q}$ onto a subspace of $\ell^q\left(L^p\right)$, every $\sigma \in (D^s_{p,q})'$ can be interpreted as a functional on that subspace. By the Hahn-Banach theorem, $\sigma$ can be extended to a continuous linear functional on $\ell^q\left(L^p\right)$ where the norm of $\sigma$ is preserved. It thus follows from \cite[Proposition 2.11.1 on page 177]{Triebel2010} that for $f\in D^s_{p,q}$ we may write
\begin{align}
\sigma(f)&=\int_{\bb{R}^d}\sum_{T\in \mathcal{T}}\sigma_T(x)\omega_T^s\psi_T(x)f(x)dx,\quad \text{where}\label{PAPERC:eq:triebelcharacterization}\\
\{\sigma_T(x)\}_{T\in \mathcal{T}}&\in \ell^{q'}\left(L^{p'}\right),\quad \text{and}\quad \norm{\sigma}_*=\norm{\{\sigma_T\}_{T\in \mathcal{T}}}_{\ell^{q'}\left(L^{p'}\right)},\label{PAPERC:eq:triebelcharacterizationdescription}
\end{align}
with $\|\sigma\|_*:=\sup_{\|\{h_T\}\|_{\ell^q\left(L^p\right)}=1}|\sigma(\{h_T\})|$ denoting the standard norm on $\left(\ell^q\left(L^p\right)\right)'$. From \eqref{PAPERC:eq:triebelcharacterization} we conclude that the proof is done if we can show that $\sum_{T\in \mathcal{T}}\sigma_T(x)\omega_T^s\psi_T(x)\in D^{-s}_{p',q'}$. This follows from \eqref{PAPERC:eq:Operatornorm} since
\begin{align*}
\norm{\sum_{T\in \mathcal{T}}\sigma_T\omega_T^s\psi_T}_{D^{-s}_{p',q'}}
&=\norm{\left\{\norm{\psi_T\left(\sum_{T'\in \widetilde{T}}\sigma_{T'}\omega_{T'}^s\psi_{T'}\right)}_{L^{p'}}\right\}_{T\in \mathcal{T}}}_{\ell^{q'}_{\omega^{-s}}}\\
&\leq C\norm{\left\{\norm{\sigma_T\omega_T^s\psi_T}_{L^{p'}}\right\}_{T\in \mathcal{T}}}_{\ell^{q'}_{\omega^{-s}}}\\
&\leq C\norm{\left\{\norm{\sigma_T}_{L^{p'}}\right\}_{T\in \mathcal{T}}}_{\ell^{q'}}\\
&=C\norm{\left\{\sigma_T\right\}_{T\in \mathcal{T}}}_{\ell^{q'}(L^{p'})}=C\norm{\sigma}_*,
\end{align*}
where we use \eqref{PAPERC:eq:triebelcharacterizationdescription} in the last equation. This proves \Theenuref{PAPERC:The:decomcovthe}{PAPERC:The:decomcovthe4}.
\end{enumerate}
\end{proof}

\bibliographystyle{abbrv}

\end{document}